\DeclareMathOperator*{\argmin}{argmin}
\DeclareMathOperator*{\argmax}{argmax}
\newcommand{\prior}{\varphi}
\newcommand{\bF}{\mathbf{f}}
\newcommand{\E}{\mathbb{E}}
\newcommand{\noise}{\lambda}
\newcommand{\noiseinv}{\frac{1}{\noise}}
\newcommand{\hyper}{\gamma}
\newcommand{\R}{\mathbb{R}}
\newcommand{\0}{\mathbf{0}}
\newcommand{\bu}{\mathbf{u}}
\newcommand{\bv}{\mathbf{v}}
\newcommand{\bw}{\mathbf{w}}
\newcommand{\bx}{\mathbf{x}}
\newcommand{\by}{\mathbf{y}}
\newcommand{\bA}{\mathcal{A}}
\newcommand{\bI}{\mathbf{I}}
\newcommand{\bL}{\mathcal{L}}
\newcommand{\Prob}{\mathcal{P}}
\newcommand{\bN}{\mathcal{N}}
\newcommand{\bgamma}{\boldsymbol{\gamma}}
\newcommand{\bGamma}{\boldsymbol{\Gamma}}
\newcommand{\bxi}{\boldsymbol{\xi}}
\newcommand{\bXi}{\boldsymbol{\xi}}
\newcommand{\dic}{\mathbf{\Phi}}
\newcommand{\dicc}{\boldsymbol{\Psi}}
\newcommand{\sbl}{\noise\mathbf{I}+\dic\bGamma\dic^\top}
\newcommand{\define}{\triangleq}
\newcommand{\diag}{\operatorname{diag}}
\newcommand{\mean}{\mathbf{m}_{\bw}}
\newcommand{\variance}{\mathbf{\Sigma}_{\bw}}
\newtheorem{proposition}{Proposition}
\newtheorem{assumption}{Assumption}
\newtheorem{remark}{Remark}
\newtheorem{theorem}{Theorem}
\newtheorem{lemma}{Lemma}
\newtheorem{example}{Example}
\newtheorem{proof}{Proof}
\newcommand{\sbsb}{\text{SB}}
\begin{document}

\title{A Sparse Bayesian Approach to the Identification of Nonlinear State-Space Systems
\author{Wei Pan, Ye Yuan, Jorge Gon\c{c}alves and Guy-Bart Stan
	\thanks{W.~Pan and G.-B.~Stan are with the Centre for Synthetic Biology and
		Innovation and the Department of Bioengineering, Imperial College London, United Kingdom. Email: {\tt\small  \{w.pan11,g.stan\}@imperial.ac.uk.}
	Y.~Yuan and J. Gon\c{c}alves are with the Control Group, Department of Engineering, University of Cambridge, United Kingdom. J. Gon\c{c}alves is also with the Luxembourg Centre for Systems Biomedicine, Luxembourg. Email: {\tt\small  \{yy311,jmg77\}@cam.ac.uk.}
	Corresponding Author: Y.~Yuan and G.-B.~Stan.}
}
}
\maketitle

\begin{abstract}
This technical note considers the identification of nonlinear discrete-time systems with additive process noise but without measurement noise. In particular, we propose a method and its associated algorithm to identify the system nonlinear functional forms and their associated parameters from a limited number of time-series data points. For this, we cast this identification problem as a sparse linear regression problem and take a Bayesian viewpoint to solve it. As such, this approach typically leads to nonconvex optimisations. We propose a convexification procedure relying on an efficient iterative re-weighted $\ell_1$-minimisation algorithm that uses general sparsity inducing priors on the parameters of the system and marginal likelihood maximisation. Using this approach, we also show how convex constraints on the parameters can be easily added to the proposed iterative re-weighted $\ell_1$-minimisation algorithm. In the supplementary material \cite{tacappendix}, we illustrate the effectiveness of the proposed identification method on two classical systems in biology and physics, namely, a genetic repressilator network and a large scale network of interconnected Kuramoto oscillators.
\end{abstract}
\begin{IEEEkeywords}
Nonlinear System Identification, Sparse Bayesian Learning, Re-weighted $\ell_1$-Minimisation
\end{IEEEkeywords}

\section{INTRODUCTION}\label{sec:introduction}

Identification from time-series data of nonlinear discrete-time state-space systems with additive process noise is relevant to many different fields such as systems/synthetic biology, econometrics, finance, chemical engineering, social networks, etc. Yet, the development of general identification techniques remains challenging, especially due to the difficulty of adequately identifying nonlinear systems~\cite{ljung1999system,billings2013nonlinear}. Nonlinear dynamical system identification aims at recovering the set of nonlinear equations associated with the system from time-series observations. The importance of nonlinear dynamical system identification and its associated difficulties have been widely recognised \cite{billings2013nonlinear,sjoberg1995nonlinear}.

Since, typically, nonlinear functional forms can be expanded as sums of terms belonging to a family of parameterised functions (see \cite[Sec.~5.4]{ljung1999system} and \cite{billings2013nonlinear}), an usual approach to identify nonlinear state-space models is to search amongst a set of possible nonlinear terms (e.g., basis functions) for a parsimonious description coherent with the available data \cite{haber1990structure}. A few choices of basis functions are provided by classical functional decomposition methods such as Volterra expansion, Taylor polynomial expansion or Fourier series~\cite{ljung1999system,billings2013nonlinear,barahona1996detection}. This is typically used to model systems such as those described by Wiener and Volterra series \cite{barahona1996detection,wiener1966nonlinear}, neural networks \cite{narendra1990identification}, nonlinear auto-regressive with exogenous inputs (NARX) models \cite{leontaritis1985input}, and Hammerstein-Wiener \cite{bai1998optimal} structures, to name just a few examples.

Recently, graphical models have been proposed to capture the structure of nonlinear dynamical networks. In the standard graphical models where each state variable represents a node in the graph and is treated as a random variable, the nonlinear relations among nodes can be characterised by factorising the joint probability distribution according to a certain directed graph \cite{kollar2009probabilistic,pearl1988probabilistic,spirtes2000causation}. However, standard graphical models are often not adequate for dealing with times series directly. This is mainly due to two aspects inherent to the construction of graphical models. The first aspect pertains to the efficiency of graphical models built using time series data. In this case, the building of graphical models requires the estimation of conditional distributions with a large number of random variables \cite{barber2010graphical} (each time series is modelled as a finite sequence of random variables), which is typically not efficient. The second aspect pertains to the estimation of the moments of conditional distribution, which is very hard to do with a limited amount of data especially when the system to reconstruct is nonlinear. In the case of linear dynamical systems, the first two moments can sometimes be estimated from limited amount of data \cite{bach2004learning,materassi2012problem}. However, higher moments typically need to be estimated if the system under consideration is nonlinear.

In this technical note, we propose a method to alleviate the problems mentioned above. This method relies on the assumption that there exits a finite set of candidate dictionary functions whose linear combination allows to describe the dynamics of the system of interest. In particular, we focus on discrete-time nonlinear systems with additive noise represented in a general state-space form. Based on this, we develop an identification framework that uses time series data and \emph{a priori} knowledge of the type of system from which these time series data have been collected, e.g.,\ biological, biochemical, mechanical or electrical systems. For example in Genetic Regulatory Network (GRN), only polynomial or rational nonlinear functional forms typically need to be considered in the identification process.

To identify the network efficiently given the available time series data, we cast this nonlinear system identification problem as a sparse linear regression problem \cite{pan2012cdc,Candes2005decoding,donoho2006compressed}. Although such problems have been widely applied in the context of sparse coding, dictionary learning or image processing \cite{mairal2008discriminative,mairal2010online}, they have received little attention in nonlinear dynamical system identification. Besides the work presented here, one of the rare example of sparse estimation technique used for dynamical system identification is the multiple kernel-based regularisation method, which has been used to estimate finite impulse response models \cite{TianshiTAC}.

Furthermore, very few contributions are available in the literature that address the identification problem with \emph{a priori} information or constraints on the parameters of the system \cite{cerone2011enforcing,zavlanos2011inferring}. In contrast, our proposed framework allows us to incorporate convex constraints on the associated model parameters, e.g.,\ equality or inequality constraints imposed among parameters, or \emph{a priori} required stability conditions.

In sparse linear regression problems, finding the sparsest solution is desirable but typically NP-hard. The classic ``Lasso'' or $\ell_{1}$-minimisation algorithm are typically used as a relaxation to alleviate this numerical difficulty \cite{tibshirani1996regression}. However, these algorithms usually only work well or have performance guarantees when the considered dictionary matrix has certain properties such as the \textit{restricted isometry property} (RIP) \cite{Candes2005decoding,dai2009subspace} or the \emph{incoherence} property \cite{donoho2003optimally}. Loosely speaking, these properties require that the columns of the dictionary matrix are orthogonal, or nearly so. Unfortunately, such properties are hardly guaranteed for nonlinear identification problems and, as a consequence, $\ell_{1}$-relaxation based algorithms typically do not work well when these conditions are not satisfied.

In this technical note, we shall explain, from a probabilistic viewpoint, how a Bayesian approach can attenuate problems arising in the case of high correlations between columns of the dictionary matrix. In particular, the main contributions of this technical note are:
\begin{itemize}
\item To formulate the problem of reconstructing discrete-time nonlinear systems with additive noise into a sparse linear regression problem. The model class in this technical note covers a large range of systems, e.g.,\ systems with multiple inputs and multiple outputs, systems with memory in their states and inputs, and autoregressive models.
\item To derive a sparse Bayesian formulation of the nonlinear system identification problem, which is casted into a nonconvex optimisation problem.
\item To develop an iterative re-weighted $\ell_1$-minimisation algorithm to convexify the nonconvex optimisation problem and solve it efficiently.
This formulation can also take into account additional convex constraints on the parameters of the model.
\end{itemize}

The generality of our framework allows it to be applied on a broad class of nonlinear system identification problems. In particular, to illustrate our results, we applied our approach to two examples: (1) the Genetic Repressilator Network, where we identify nonlinear regulation relationships between genes, transcriptional and translational strengths and degradation rates, and (2) a network of Kuramoto Oscillators, where we identify the network topology and nonlinear coupling functions. Details about these examples can be found in the supplementary material \cite{tacappendix}.

This technical note is organised as follows. Section \ref{sec:model} introduces the class of nonlinear models considered. Section \ref{sec:probstate} formulates the nonlinear identification problem into a sparse linear regression problem. Section~\ref{sec:bayesian} re-interprets the sparse problem from a Bayesian point of view, while Section~\ref{sec:cost1} shows how the resulting nonconvex optimisation problem can be convexified and solved efficiently using an iterative re-weighted $\ell_1$-minimisation algorithm. Finally, we conclude and discuss several future open questions.

\section{FORMULATION OF THE NONLINEAR IDENTIFICATION PROBLEM}\label{sec:identification}

\subsection{Considered Nonlinear Dynamical Model Class}\label{sec:model}

We consider dynamical systems described by discrete-time nonlinear state-space equations driven by additive Gaussian noise. The discrete-time dynamics of the $i$-th state variable $x_{i}$, $i=1,\ldots,n_{\bx}$ is assumed to be described by:
\begin{equation}
\begin{aligned}
x_i(t_{k+1})
& = \mathbf{F}_i(\bx(t_{k}),\bu(t_{k}))+\xi_i(t_{k})  \\
&=\sum\nolimits_{s=1}^{N_{i}}v_{is}f_{is}(\bx(t_{k}),\bu(t_{k}))+\xi_i(t_{k})  \\
&=\bF^\top_i(\bx(t_{k}),\bu(t_{k}))\bv_i+\xi_i(t_{k}),
\label{eq:expansion}
\end{aligned}
\end{equation}
where $\bx=[x_{1},\ldots ,x_{n_\bx}]^\top\in {\R}^{n_\bx}$ denotes the state vector, $\bu=[u_{1},\ldots ,u_{n_\bu}]^\top\in {\R}^{n_\bu}$ denotes the input vector, and $\mathbf{F}_i(\cdot): \mathbb{R}^{n_\bx+n_\bu}\rightarrow \mathbb{R}$ is a smooth nonlinear function which is assumed to be represented as a linear combination of several dictionary functions $f_{is}(\bx(t_{k}),\bu(t_{k})): \mathbb{R}^{n_\bx+n_\bu}\rightarrow \mathbb{R}$ (see Sec.~5.4 in \cite{ljung1999system}). These constituent dictionary functions can be monomial, polynomial, constant or any other functional form such as rational, exponential, trigonometric etc. $\bF_i(\bx(t_{k}),\bu(t_{k}))$ is the vector of considered dictionary functions (which does not contain unknown parameters) while $\bv_i \in {\mathbb{R}^{N_i}}$ appearing in \eqref{eq:expansion} is the weight vector associated with the dictionary functions vector. The additive noise $\xi_i(t_{k})$ is assumed to be i.i.d. Gaussian distributed with zero mean: $\xi_i(t_{k})\thicksim\bN(0, \noise_i)$, with
$
\E(\xi_i (t_{p}))=0, \ \E(\xi_i(t_{p})\xi_i(t_{q}))=\noise_i\delta_{pq},
\label{model:noise}
$
where
$
\delta _{pq}=\left\{
\begin{array}{ll}
1, & p=q, \\
0, & p\neq q
\end{array}
\right.
$.
$\xi_i(\cdot)$ and $\xi_j(\cdot)$ are assumed independent $\forall i \neq j$.

\begin{remark}
The class of systems considered in (\ref{eq:expansion}) can be extended to the more general dynamics class
$
\bx_i(t_{k+1})=\mathbf{F}_i(\bx(t_{k}),\ldots, \bx(t_{k-m_\bx}), \bu(t_{k}),\ldots,\bu(t_{k-m_\bu}) )+\bxi(t_{k}),
$
where the ``orders'' $m_\bx$ and $m_\bu$ are assumed to be known \emph{a priori}, and $\mathbf{F}_i(\cdot): \mathbb{R}^{(m_{\bx}+1)n_\bx+(m_{\bu}+1)n_\bu}\rightarrow \mathbb{R}$. An example of such system can be found in the supplementary material \cite{tacappendix} (see Example 1). In particular, MIMO nonlinear autoregressive models belong to such descriptions.
\end{remark}

\subsection{Identification Problem Statement}
\label{sec:probstate}

If $M$ data samples satisfying (\ref{eq:expansion}) can be obtained from the system of interest, the system in (\ref{eq:expansion}) can be written as
$
\mathbf{y}_i=\dicc_i\bv_i+\bXi_i, \ i=1,\ldots,n_{\bx},
$
where
$\by_i \define\left[x_i(t_{1}),\ldots,x_i(t_{M})\right]^\top\in {\mathbb{R}}^{M\times 1}$,
$\bv_i \define \left[v_{i1},\ldots,v_{iN_i}\right]^\top \in {\mathbb{R}}^{N_i\times 1}$,
$\bXi_i \define
\left[\xi_i(t_{0}),\ldots,\xi_i(t_{M-1})\right]^\top\in {\mathbb{R}}^{M\times 1}$, and $\mathbf{\dicc}_i\in {\mathbb{R}}^{M\times N_i}$ represents the dictionary matrix with its $j$-th column being $[f_{ij}(\bx(t_{0}),\bu(t_{0})), \ldots, f_{ij}(\bx(t_{M-1}),\bu(t_{M-1}))]^{\top}$.

In this framework, the identification problem amounts to finding $\bv_i \in \mathbb{R}^{N_i \times 1}$ given the measured data stored in $\mathbf{y}_i$. This, in turn, amounts to solving a linear regression problem, which can be done using standard least square approaches, provided that the structure of the nonlinearities in the model are known, i.e., provided that $\dicc_i$ is known. In what follows, we make the following assumption on the measurements contained in $\by_i$.
\begin{assumption}
\label{assumption:observable}
The system (\ref{eq:expansion}) is fully measurable, i.e., time series data of all the state variables $x_i$ can be obtained.
\end{assumption}

Depending on the field for which the dynamical model needs to be built, only a few typical nonlinearities specific to this field need to be considered. In what follows we gather in a matrix $\dic_i$ similar to $\dicc_i$ the set of \emph{all} candidate/possible  dictionary functions that we want to consider for identification:
\begin{equation}
\mathbf{y}_i=\dic_i\bw_i+\bXi_i, \ i=1,\ldots,n_{\bx}.
\label{problem:expand}
\end{equation}
The solution $\bw_i$ to \eqref{problem:expand} is typically going to be sparse,  which is mainly due to the potential introduction of non-relevant and/or non-independent dictionary functions in $\dic_i$.

Since the $n_{\bx}$ linear regression problems in (\ref{problem:expand}) are independent, for simplicity of notation, we omit the subscript $i$ used to index the state variable and simply write:
\begin{equation}
\mathbf{y}=\dic \bw+\bXi.
\label{problem}
\end{equation}
It should be noted that $N$, the number of dictionary functions or number of columns of the dictionary matrix $\dic \in \mathbb{R}^{M \times N}$, can be very large, at least larger than the number of observations $M$. Moreover, since $\mathbf{y}$ is constructed from time series data, typically two or more of the columns of the $\dic$ matrix are highly correlated. In this case standard methods, which involve some form of $\ell_1$-regularised minimisation, often yield poor performance on system identification~\cite{Candes2006robust}. 

\section{BAYESIAN VIEWPOINT ON THE RECONSTRUCTION PROBLEM}
\label{sec:bayesian}

\subsection{Sparsity Inducing Priors }
Bayesian modelling treats all unknowns as stochastic variables with certain probability distributions \cite{bishop2006pattern}. For $\by=\dic \bw+\bXi$, it is assumed that the stochastic variables in the vector $\bXi$ are Gaussian i.i.d. with $\bXi\thicksim\bN(\mathbf{0}, \noise\bI)$.
In such case, the likelihood of the data given $\bw$ is
$
\Prob(\by|\bw)
=\mathcal{N}(\by|{\dic} {\bw},\noise\bI)
\propto\exp \left[ -\frac{1}{2 \noise}\| \by-\dic\bw\|_2^{2}\right].
$
We define a prior distribution $\Prob(\bw)$ as
$
\Prob(\bw)\propto\exp \left[-\frac{1}{2}\sum_{j}g(w_j)\right]=\prod_{j}\exp \left[-\frac{1}{2}g(w_j)\right]=\prod_{j}\Prob(w_j),
$
where $g(w_j)$ is a given function of $w_j$.
To enforce sparsity on $\bw$, the function $g(\cdot)$ is usually chosen as a concave, non-decreasing function of $|w_j|$. Examples of such functions $g(\cdot)$ include Generalised Gaussian priors and Student's \emph{t} priors (see \cite{palmer2006variational} for details).

Computing the posterior mean $\E(\bw|\by)$ is typically intractable because the posterior $\Prob(\bw|\by)$ is highly coupled and non-Gaussian.
To alleviate this problem, ideally one would like to approximate $\Prob(\bw|\by)$ as a Gaussian distribution for which efficient algorithms to compute the posterior exist \cite{bishop2006pattern}. Another approach consists in considering \emph{super-Gaussian} priors, which yield a lower bound for the priors $\Prob(w_j)$ \cite{palmer2006variational}.
The sparsity inducing priors mentioned above are \emph{super-Gaussian}. More specifically, if we define $\boldsymbol{\hyper} \define \left[\hyper_1, \ldots, \hyper_N\right]^\top \in \R^N_{+}$, we can represent the priors in the following relaxed (variational) form:
$
\Prob(\bw)=\prod_{j=1}^{n}\Prob(w_j)$, $\Prob(w_j)=\max_{\hyper_j>0}\bN(w_j|0,\hyper_j)\prior(\hyper_j)
$,
where $\prior(\hyper_j)$ is a nonnegative function which is treated as a hyperprior with $\hyper_j$ being its associated hyperparameters. Throughout, we call $\prior(\hyper_j)$ the ``\emph{potential function}''. This Gaussian relaxation is possible if and only if $\log \Prob(\sqrt{w_j})$ is concave on $(0,\infty)$.
The following proposition provides a justification for the above:

\begin{proposition}\cite{palmer2006variational}
\label{super-Gaussian}
A probability density $\Prob(w_j)\equiv \exp(-g(w_j^2))$ can be represented in the convex variational form:
$
\Prob(w_j)=\max_{\hyper_j>0}\bN(w_j|0,\hyper_j)\prior(\hyper_j)
$
if and only if $-\log \Prob(\sqrt{w_j})=g(w_j)$ is concave on $(0,\infty)$. In this case  the potential function takes the following expression:
$
\prior(\hyper_j)=\sqrt{{2\pi}/{\hyper_j}}\exp\left(g^{*}\left({\hyper_j}/{2}\right)\right)
$
where $g^{*}(\cdot)$ is the concave conjugate of $g(\cdot)$.
A symmetric probability density $\Prob(w_j)$ is said to be super-Gaussian if  $\Prob(\sqrt{w_j})$ is log-convex on $(0,\infty)$.
\end{proposition}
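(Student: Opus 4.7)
The heart of the proposition is a Fenchel-type duality between a scaled Gaussian mixture and concave functions on the positive half-line. The natural first step is to pass to the variable $u \define w_j^2 \ge 0$ and take logarithms, so that the claimed variational identity becomes
\begin{equation*}
-g(u) \;=\; \sup_{\hyper_j > 0}\left[\,-\tfrac{u}{2\hyper_j} \;-\; \tfrac{1}{2}\log(2\pi\hyper_j) \;+\; \log\prior(\hyper_j)\,\right].
\end{equation*}
Once in this form, each branch indexed by $\hyper_j$ is an affine function of $u$ with slope $-1/(2\hyper_j) \le 0$, so the whole right-hand side is a pointwise supremum of affine functions of $u$.

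The ``only if'' direction then falls out immediately: any pointwise supremum of affine functions is convex, so if the variational representation exists, $-g$ is convex on $(0,\infty)$, equivalently $g$ is concave there. This is essentially for free once the reduction above is written down.

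For the ``if'' direction I would bring in Fenchel--Legendre duality for concave functions. Assuming $g:(0,\infty) \to \R$ is concave (and closed, i.e., upper semicontinuous, which one takes as part of the regularity of the prior), its concave conjugate $g^*(\lambda) \define \inf_{u > 0}[\lambda u - g(u)]$ satisfies, by Fenchel--Moreau, $g(u) = \inf_{\lambda}[\lambda u - g^*(\lambda)]$, or equivalently $-g(u) = \sup_{\lambda}[g^*(\lambda) - \lambda u]$. Matching the linear-in-$u$ part of this dual with the $-u/(2\hyper_j)$ term in the variational identity dictates the change of variable relating $\lambda$ to $\hyper_j$ (the paper's convention producing the argument $\hyper_j/2$ in $g^*$); the remaining constant in $\hyper_j$ is then absorbed into $\log\prior(\hyper_j)$ by equating it with $\tfrac{1}{2}\log(2\pi\hyper_j)$ subtracted off. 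Reading this off gives exactly the stated potential function $\prior(\hyper_j)=\sqrt{2\pi/\hyper_j}\exp(g^*(\hyper_j/2))$.

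The step I expect to be the main obstacle is verifying that the supremum over $\hyper_j>0$ is actually attained (rather than merely being an upper/lower bound) so that the equality $\Prob(w_j) = \max_{\hyper_j>0}\bN(w_j|0,\hyper_j)\prior(\hyper_j)$, with a genuine maximum, holds. This requires a little care: one needs $g$ to be a closed concave function so Fenchel--Moreau applies without a gap, and one tacitly uses that a sparsity-promoting $\Prob$ makes $g$ nondecreasing in $u$, ensuring the optimizing dual variable is positive and therefore the reparametrization $\lambda \leftrightarrow \hyper_j$ does not leave the feasible set. Modulo these regularity points, both implications follow from the standard biconjugate theorem applied to $g$, with the Gaussian kernel simply providing the ``affine template'' in the variable $u = w_j^2$.
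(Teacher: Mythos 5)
The paper itself offers no proof of Proposition~\ref{super-Gaussian}; it is quoted verbatim from \cite{palmer2006variational}, so there is nothing in the paper or its appendix to compare your argument against. On its own merits, your outline is the standard one and essentially the proof in that reference: pass to $u=w_j^2$, note that the variational form expresses $-g(u)$ as a pointwise supremum of functions affine in $u$ (which yields the ``only if'' direction immediately), and obtain the ``if'' direction together with the formula for $\prior$ from the biconjugation identity $-g(u)=\sup_{\lambda}\left[g^{*}(\lambda)-\lambda u\right]$ for a closed concave $g$. Your caveats about attainment of the supremum and about $g$ being nondecreasing (so that the optimal dual variable lies in the feasible range) are exactly the regularity points that need care.

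One bookkeeping step does not come out as you claim, although the blame arguably lies with the statement rather than with your method. Under the convention you and the proposition both use, $\bN(w_j|0,\hyper_j)=(2\pi\hyper_j)^{-1/2}\exp\left(-w_j^2/(2\hyper_j)\right)$ with $\hyper_j$ a \emph{variance}, so the affine branch in $u$ has slope $-1/(2\hyper_j)$ and the matching forces $\lambda=1/(2\hyper_j)$; reading off the $u$-independent part then gives $\prior(\hyper_j)=\sqrt{2\pi\hyper_j}\,\exp\bigl(g^{*}\bigl(1/(2\hyper_j)\bigr)\bigr)$, not the displayed $\sqrt{2\pi/\hyper_j}\,\exp\left(g^{*}(\hyper_j/2)\right)$. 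The displayed expression is the one from \cite{palmer2006variational}, where the scale variable is a \emph{precision}, i.e., the Gaussian factor is $\bN(w_j|0,\hyper_j^{-1})$. Your assertion that the matching ``gives exactly the stated potential function'' therefore conceals a change of variable $\hyper_j\mapsto 1/\hyper_j$; to be correct you must either carry out the derivation in the precision parametrisation or report the conjugate evaluated at $1/(2\hyper_j)$ with the factor $\sqrt{2\pi\hyper_j}$. This does not affect the validity of the equivalence itself, only the explicit form of $\prior$.
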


\subsection{Marginal Likelihood Maximisation}
For a fixed $\bgamma=\left[{\hyper}_{1},\ldots, {\hyper}_{N}\right]$,
we define a relaxed prior, which is a joint probability distribution over $\bw$ and ${\bgamma}$, as
$
\Prob(\bw; \bgamma)
=\prod_{j}\bN(w_j|0,{\hyper}_j)\prior({\hyper}_j)
=\Prob(\bw|\bgamma)\Prob(\bgamma) \le \Prob(\bw)
$,
where $\Prob(\bw|\bgamma)\define \prod_{j}\bN(w_j|0,{\hyper}_j),
\Prob(\bgamma) \define \prod_{j} \prior({\hyper}_j)$.
Since the likelihood is  $\Prob(\by|\bw)$ is Gaussian, we can get a relaxed posterior which is also Gaussian
$
\Prob(\bw|\by,\bgamma)
=\frac{\Prob(\by|\bw)\Prob(\bw;\bgamma)}{\int \Prob(\by|\bw)\Prob(\bw;\bgamma)d\bw} 
=\bN(\mean,\variance).
$
Defining $\bGamma\define \diag[\bgamma]$, the posterior mean and covariance are given by:
\begin{eqnarray}
\mean&=& \bGamma \dic^\top (\sbl)^{-1} \by, \label{mean} \\
\variance&=& \bGamma- \bGamma \dic^\top (\sbl)^{-1} \dic. \label{variance}
\end{eqnarray}
Now the key question is how to choose the most appropriate $\bgamma=\hat{\bgamma}=\left[\hat{\hyper}_{1},\ldots, \hat{\hyper}_{N}\right]$ to maximise $\prod_{j}\bN(w_j|0,{\hyper}_j)\prior({\hyper}_j)$ such that $\Prob(\bw|\by,\hat{\bgamma})$ can be a ``good'' relaxation to $\Prob(\bw|\by)$.
Using the product rule for probabilities, we can write the full posterior as:
$
\Prob(\bw, \bgamma|\by)
\propto \Prob(\bw|\by,\bgamma)\Prob(\bgamma|\by)
= \bN(\mathbf{m}_{\bw},\mathbf{\Sigma}_{\bw}) \times {\Prob(\by|\bgamma)\Prob(\bgamma)}/{\Prob(\by)}.
$
Since $\Prob(\by)$ is independent of $\bgamma$, the quantity $\Prob(\by|\bgamma)\Prob(\bgamma)=\int \Prob(\by|\bw)\Prob(\bw|\bgamma)\Prob(\bgamma)d\bw$ is the prime target for variational methods \cite{wainwright2008graphical}. This quantity is  known as evidence or marginal likelihood. A good way of selecting $\hat{\bgamma}$ is to choose it as the minimiser of the sum of the misaligned probability mass, e.g.,
\begin{equation}
\begin{aligned}
\hat{\bgamma}&=  \argmin\limits_{\bgamma\geq\mathbf{0}} \int \Prob(\by|\bw)\left|\Prob(\bw)-\Prob(\bw;\bgamma)\right|d\bw \\
&= \argmax\limits_{\bgamma\geq\mathbf{0}} \int \Prob(\by|\bw)\prod_{j=1}^{n}\bN(w_j|0,\hyper_j)\prior(\hyper_j)d\bw.
\label{mass}
\end{aligned}
\end{equation}
The second equality is  a consequence of $\Prob(\bw;\bgamma)\le\Prob(\bw)$. The procedure in (\ref{mass}) is referred to as  evidence maximisation or type-II maximum likelihood \cite{tipping2001sparse}. It means that the marginal likelihood can be maximised by selecting the most probable hyperparameters able to explain the observed data. Once $\hat{\bgamma}$ is computed, an estimate of the unknown weights can be obtained by setting $\hat{\bw}$ to the posterior mean (\ref{mean}) as $\hat{\bw}=\E(\bw|\by;\hat{\bgamma})=\hat{\bGamma} \dic^\top (\noise\mathbf{I+{\dic}\hat{\bGamma}}{{\dic}}^\top)^{-1} \by$, with $\hat{\bGamma}\define \diag[\hat{\bgamma}]$. If an algorithm can be proposed to compute $\hat{\bgamma}$ in (\ref{mass}), we can, based on it, obtain an estimation of the posterior mean $\hat{\bw}$.

\subsection{Enforcing Additional Constraints on $\bw$}

It is often important to be able to impose constraints on $\hat{\bw}$ when formulating the optimisation problem (\ref{mass}) used to compute $\hat{\bw}$ from $\hat{\bgamma}$. In physical and biological systems, positivity of the parameters $\bw$ of the system is an example of such constraints. Another example of constrained optimisation comes from stability considerations, which emerge naturally when the underlying system is known \emph{a priori} to be stable\footnote{Many stability conditions can be formulated as convex optimisation problems (see for example \cite{boyd1987linear,horn1990matrix}).}. Yet, only a few contributions in the literature address the problem of how to take into account \emph{a priori} information on system stability in the context of system identification \cite{cerone2011enforcing,zavlanos2011inferring}. To be able to integrate constraints on $\bw$ into the problem formulation, we consider the following assumption on $\bw$.
\begin{assumption}
\label{assumption-constraints}
Constraints on the weights $\bw$ can be described by a set of convex functions:
\begin{equation}
\begin{aligned}
H^{[I]}_{i}(\bw)&\leq0,\ \ i=1,\ldots, m_I, \\
H^{[E]}_{j}(\bw)&=0,\ \ j=1,\ldots, m_E.
\label{convexconstraints}
\end{aligned}
\end{equation}
where the convex functions $H^{[I]}_{i}: \R^{N}\rightarrow \R$ are used to define inequality constraints, whereas the convex functions $H^{[E]}_{j}: \R^{N}\rightarrow \R$ are used to define equality constraints.
\end{assumption}

\section{NONCONVEX OPTIMISATION FOR IDENTIFICATION PROBLEMS}\label{sec:cost1}

In this section, we derive a sparse Bayesian formulation of the problem of system identification with convex constraints, which is casted into a nonconvex optimisation problem. The nonconvex optimisation problem can be dealt by an iterative re-weighted $\ell_1$-minimisation algorithm.

\subsection{Nonconvex Objective Function in Hyperparameter}
\label{sec:hyper}

\begin{theorem}
\label{theorem:hyper}
The optimal hyperparameters $\hat{\bgamma}$ in (\ref{mass}) can be obtained by minimising the following objective function
\begin{equation}
\begin{aligned}
{\bL_{\bgamma}}\left( \bgamma \right) &= \log \left\vert \sbl \right\vert \\
&+\by^\top(\sbl)^{-1}\by+\sum\nolimits_{j=1}^{N}p(\hyper_j),
\label{hypercostfunction}
\end{aligned}
\end{equation}
where $p(\hyper_j)=-2\log \prior(\hyper_j)$.
The posterior mean is then given by
$\hat{\bw}=\hat{\bGamma} \dic^\top (\noise\mathbf{I+{\dic}\hat{\bGamma}}{{\dic}}^\top)^{-1} \by$,
where $\hat{\bGamma}=\diag[\hat{\bgamma}]$.

\end{theorem}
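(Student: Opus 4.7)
The plan is to reduce the argmax in (\ref{mass}) to the claimed argmin by taking $-2\log$ of the marginal likelihood and then carrying out the Gaussian integral over $\bw$ explicitly. Since the objective in (\ref{mass}) is nonnegative and the logarithm is monotone, maximising it over $\bgamma \geq \mathbf{0}$ is equivalent to minimising its negative logarithm (up to additive constants independent of $\bgamma$). This immediately separates the hyperprior contribution $\prod_j \prior(\hyper_j)$, which will produce the term $\sum_j p(\hyper_j)$ with $p(\hyper_j) = -2\log \prior(\hyper_j)$, from the ``evidence'' term $\int \Prob(\by|\bw)\Prob(\bw|\bgamma)\,d\bw$.

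To evaluate the evidence, I would first recognise that with $\bw \sim \bN(\mathbf{0}, \bGamma)$ (since $\Prob(\bw|\bgamma) = \prod_j \bN(w_j|0,\hyper_j)$) and $\by \mid \bw \sim \bN(\dic\bw, \noise \bI)$, the joint distribution of $(\bw, \by)$ is Gaussian, so marginalising $\bw$ yields $\by \sim \bN(\mathbf{0}, \sbl)$. Concretely, since $\E[\by]=\mathbf{0}$ and $\operatorname{Cov}(\by) = \noise\bI + \dic \bGamma \dic^\top$, one gets
\begin{equation*}
\int \Prob(\by|\bw)\Prob(\bw|\bgamma)\,d\bw
= \frac{1}{(2\pi)^{M/2}\bigl|\sbl\bigr|^{1/2}}
\exp\!\left(-\tfrac{1}{2}\by^\top(\sbl)^{-1}\by\right).
\end{equation*}
Multiplying by $\prod_j \prior(\hyper_j)$, taking $-2\log$, and discarding the constant $M\log(2\pi)$ (which does not affect the optimiser) recovers exactly the three terms in $\bL_{\bgamma}(\bgamma)$ of (\ref{hypercostfunction}).

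For the posterior mean formula, I would invoke the Gaussian conjugacy that was already established just above the theorem: the relaxed posterior $\Prob(\bw|\by,\bgamma)$ is $\bN(\mean, \variance)$ with $\mean$ given by (\ref{mean}). Substituting the optimiser $\hat{\bgamma}$ (equivalently $\hat{\bGamma} = \diag[\hat{\bgamma}]$) into (\ref{mean}) yields $\hat{\bw} = \hat{\bGamma}\dic^\top(\noise\bI + \dic\hat{\bGamma}\dic^\top)^{-1}\by$ as claimed.

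There is no real analytic obstacle here; the proof is essentially a bookkeeping exercise that relies on the standard Gaussian marginalisation identity and the monotonicity of $-\log$. The only points that require a small amount of care are (i) verifying that the constants dropped in the $\log$ are genuinely independent of $\bgamma$, and (ii) ensuring that $\sbl$ is invertible so that the Gaussian density is well-defined — which follows from $\noise > 0$ (noise variance strictly positive makes $\sbl \succ \mathbf{0}$ for any $\bgamma \geq \mathbf{0}$).
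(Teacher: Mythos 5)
Your proof is correct and reaches the same objective function, but it takes a visibly shorter route than the paper. The paper's proof is a fully explicit computation: it writes the evidence integral with $E(\bw)=\frac{1}{2\noise}\|\by-\dic\bw\|^2+\frac{1}{2}\bw^\top\bGamma^{-1}\bw$, completes the square to get $E(\bw)=\frac{1}{2}(\bw-\mean)^\top\variance^{-1}(\bw-\mean)+E(\by)$, invokes the Woodbury identity to show $E(\by)=\frac{1}{2}\by^\top(\sbl)^{-1}\by$, evaluates the resulting Gaussian integral as $(2\pi)^{N/2}|\variance|^{1/2}\exp\{-E(\by)\}$, and finally uses the determinant identity $|\bGamma^{-1}|\,|\sbl|=|\noise\bI|\,|\bGamma^{-1}+\noiseinv\dic^\top\dic|$ to assemble $\log|\sbl|$ from $-\log|\variance|+\log|\bGamma|+M\log\noise$. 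You replace all of this with a single appeal to the standard linear-Gaussian marginalisation lemma, $\by\sim\bN(\mathbf{0},\sbl)$ when $\bw\sim\bN(\mathbf{0},\bGamma)$ and $\by\mid\bw\sim\bN(\dic\bw,\noise\bI)$, which is of course proved by exactly the computation the paper carries out. What your shortcut buys is brevity and transparency about why the three terms appear; what the paper's longer derivation buys is reusable intermediate identities --- in particular the re-expression of the data-dependent term $\by^\top(\sbl)^{-1}\by=\min_{\bx}\{\noiseinv\|\by-\dic\bx\|_2^2+\bx^\top\bGamma^{-1}\bx\}$, which is the workhorse of the subsequent proofs of Lemma \ref{lemma:concave} and Theorem \ref{theorem:MAP}. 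One cosmetic note: your constant is $M\log(2\pi)$, which is in fact the correct one (the paper's final display carries $M\log 2\pi\noise$, an apparent typo); either way it is independent of $\bgamma$ and harmless. Your treatment of the posterior-mean claim by direct substitution of $\hat{\bgamma}$ into (\ref{mean}) matches the paper.
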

\begin{proof}
See Section A in the Appendix \cite{tacappendix}.
\end{proof}

\begin{lemma}\label{lemma:concave}
The objective function in the hyperparameter $\bgamma$-space, ${\bL_{\bgamma}}\left( \bgamma \right)$ in (\ref{hypercostfunction}), is nonconvex.
\end{lemma}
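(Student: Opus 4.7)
The plan is to split the objective into its three constituent pieces and then exhibit curvature that cannot possibly be convex. Write
\[
\bL_{\bgamma}(\bgamma)=L_1(\bgamma)+L_2(\bgamma)+L_3(\bgamma),
\]
with $L_1(\bgamma)\define\log|\sbl|$, $L_2(\bgamma)\define \by^\top(\sbl)^{-1}\by$, and $L_3(\bgamma)\define\sum_{j=1}^N p(\gamma_j)$. Since $\sbl$ is an affine function of $\bgamma$ taking values in the positive-definite cone for $\bgamma\ge 0$, standard matrix-convexity results give that $L_1$ is concave in $\bgamma$ and $L_2$ is convex in $\bgamma$. Hence $L_1+L_2$ is a (non-affine) concave-plus-convex expression whose convexity must be determined by which curvature dominates. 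The task reduces to showing that, for typical sparsity-inducing potential functions $\prior(\cdot)$, the concave contribution of $L_1$ is not killed off by $L_2+L_3$.

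The cleanest way I would carry this out is by restricting to a one-dimensional slice. Fix all coordinates of $\bgamma$ except $\gamma_1$, let $\bA_0\define \noise\bI+\sum_{j\ne 1}\gamma_j\dic_j\dic_j^\top$ (constant in $\gamma_1$), and use the Sherman--Morrison and matrix-determinant lemmas to obtain the closed forms
\[
L_1=\log|\bA_0|+\log(1+\gamma_1 c),\qquad L_2=\by^\top\bA_0^{-1}\by-\frac{\gamma_1 b^2}{1+\gamma_1 c},
\]
where $c\define \dic_1^\top\bA_0^{-1}\dic_1>0$ and $b\define\by^\top\bA_0^{-1}\dic_1$. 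Differentiating twice with respect to $\gamma_1$ yields
\[
\frac{d^2}{d\gamma_1^2}(L_1+L_2)=\frac{c\bigl(2b^2-c(1+\gamma_1 c)\bigr)}{(1+\gamma_1 c)^3},
\]
which is strictly negative for all sufficiently large $\gamma_1$. Hence $L_1+L_2$ has negative curvature along the $\gamma_1$-axis, contradicting convexity of the sum (which would require a nonnegative second derivative along every coordinate ray).

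The remaining subtlety is the hyperprior term $L_3$. For the sparsity-inducing priors admitted by Proposition~\ref{super-Gaussian} (e.g.\ Student's $t$, Jeffreys, Laplacian), $p(\gamma_j)=-2\log\prior(\gamma_j)$ is either concave or affine in $\gamma_j$ for large $\gamma_j$, so adding $L_3$ cannot rescue convexity on the slice. A fully general statement is obtained by noting that, for $\by=\mathbf{0}$ and any affine $p$ (which covers the Laplacian/$\ell_1$ case), $\bL_\bgamma$ collapses to $\log|\sbl|$ plus a linear term, which is strictly concave whenever $\dic$ has any nonzero column---already enough to contradict convexity. The main obstacle is therefore purely bookkeeping: stating the result in a way that is honest about the dependence on $\prior$ without having to enumerate cases; the one-dimensional Sherman--Morrison calculation above handles all the super-Gaussian priors of interest uniformly.
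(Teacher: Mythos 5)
Your decomposition is exactly the one in the paper's own proof: there, the data-dependent term $\by^\top(\sbl)^{-1}\by$ is rewritten as $\min_{\bx}\{\noiseinv\|\by-\dic\bx\|_2^2+\bx^\top\bGamma^{-1}\bx\}$ and hence is convex in $\bgamma$ (your $L_2$, which you establish instead via convexity of $X\mapsto\by^\top X^{-1}\by$ composed with an affine map---equivalent), while $h(\bgamma)=\log|\sbl|+\sum_j p(\hyper_j)$ is shown concave using concavity of $\log|\cdot|$ on the positive-semidefinite cone and concavity of $p$ for super-Gaussian priors (your $L_1+L_3$). The real difference is that the paper stops at this convex-plus-concave splitting and simply declares the sum nonconvex, which is not a valid inference by itself (a convex function plus a mildly concave one can still be convex); your Sherman--Morrison slice computation supplies exactly the missing certificate. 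The second derivative $c\bigl(2b^2-c(1+\gamma_1 c)\bigr)/(1+\gamma_1 c)^3$ is indeed strictly negative for $\gamma_1$ large once you fix a column with $\dic_1\neq\mathbf{0}$ (so that $c>0$), and since $p$ is concave by Proposition~\ref{super-Gaussian} and the paper's own argument, adding $L_3$ can only push the curvature further down. So your route buys a genuine verification of the lemma's literal claim at the cost of a one-dimensional calculation the paper omits, whereas the paper's version buys only the structural decomposition---which, to be fair, is what is actually exploited downstream in Theorem~\ref{theorem:MAP}, Lemma~\ref{lemma:gsb} and the re-weighted $\ell_1$ scheme. Two points to tighten: state explicitly that you choose a nonzero column so that $c>0$ (otherwise the slice is flat and shows nothing), and replace the slightly vague claim that $p$ is ``concave or affine for large $\gamma_j$'' by the concavity of $p$ the paper already assumes for super-Gaussian priors; the closing remark about $\by=\mathbf{0}$ should be dropped, since $\by$ is given data rather than something you are free to choose, so that observation does not prove nonconvexity of the objective actually under consideration.
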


\begin{proof}
See Section B in the Appendix \cite{tacappendix}.
\end{proof}

\subsection{Nonconvex Objective Function in $\bw$ with Convex Constraints }
\label{sec:with-constraint}

Based on the analysis in Section \ref{sec:hyper}, we first derive a dual objective function in the $\bw$-space with convex constraints by considering the equivalent objective function of (\ref{hypercostfunction}) in the $\bgamma$-space. We then show that this equivalent objective function is also nonconvex.

\begin{theorem}
\label{theorem:MAP}
The estimate for $\bw$ with constraints can be obtained by solving the optimisation problem
\begin{equation}
\label{w-nonconvex}
\begin{split}
\min_{\bw} \|\by-\dic\bw\|_{2}^{2}+\noise g_{\sbsb}(\bw), \,\,\,\,\, \mathrm{subject}\,\,\mathrm{to}  \,\,\,\,\
(\ref{convexconstraints})
\end{split}
\end{equation}
where
$
g_{\sbsb}(\bw)=\min\limits_{\bgamma\geq \mathbf{0}}\{\bw^\top\bGamma^{-1}\bw+\log|\sbl|+\sum\nolimits_{j=1}^{N}p(\hyper_j)\} \label{penalty}
$
and the estimate of the stochastic variable $\bw$ is given by the poseterior mean $\mean$ defined in (\ref{mean}).
\end{theorem}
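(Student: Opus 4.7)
The plan is to convert the $\bgamma$-space objective from Theorem~\ref{theorem:hyper} into a joint $(\bw,\bgamma)$ objective via a variational identity for the quadratic form, then interchange the order of minimisation to obtain a pure $\bw$-space objective into which the convex constraints (\ref{convexconstraints}) can be grafted directly. The variational identity I would use is
\begin{equation*}
\by^\top(\sbl)^{-1}\by \;=\; \min_{\bw}\left\{\noiseinv\|\by-\dic\bw\|_{2}^{2}+\bw^\top\bGamma^{-1}\bw\right\},
\end{equation*}
which follows either from completing the square in the quadratic $\noiseinv\|\by-\dic\bw\|^2 + \bw^\top\bGamma^{-1}\bw$ (with minimiser $\bw^{*}=(\noiseinv\dic^\top\dic+\bGamma^{-1})^{-1}\noiseinv\dic^\top\by$) and applying the Woodbury identity, or equivalently from the standard Gaussian marginal-over-conditional representation underlying (\ref{mean})--(\ref{variance}).

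Substituting this identity into (\ref{hypercostfunction}) turns ${\bL_{\bgamma}}(\bgamma)$ into $\min_{\bw}\bL(\bw,\bgamma)$, where
\begin{equation*}
\bL(\bw,\bgamma) \;=\; \noiseinv\|\by-\dic\bw\|_{2}^{2}+\bw^\top\bGamma^{-1}\bw+\log\!\left\vert\sbl\right\vert+\sum\nolimits_{j=1}^{N}p(\hyper_j).
\end{equation*}
Next I would swap the two minimisations, which is justified because the joint objective is bounded below in each variable and we are free to reorder infima:
\begin{equation*}
\min_{\bgamma\geq\mathbf{0}}{\bL_{\bgamma}}(\bgamma) \;=\; \min_{\bw}\min_{\bgamma\geq\mathbf{0}}\bL(\bw,\bgamma) \;=\; \min_{\bw}\left\{\noiseinv\|\by-\dic\bw\|_{2}^{2}+g_{\sbsb}(\bw)\right\},
\end{equation*}
with $g_{\sbsb}(\bw)$ exactly the inner infimum over $\bgamma\geq\mathbf{0}$ appearing in the theorem statement. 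Multiplying by the positive constant $\noise$ (which does not change the argmin) gives the objective in (\ref{w-nonconvex}).

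To incorporate the convex constraints of Assumption~\ref{assumption-constraints}, I would simply restrict the outer minimisation over $\bw$ to the feasible set defined by (\ref{convexconstraints}); the inner $\bgamma$-minimisation is unaffected because the constraints do not involve $\bgamma$. Finally, to confirm that the optimal $\bw$ coincides with the posterior mean $\mean$ in (\ref{mean}), I would recall that, for any fixed $\bgamma$, the minimiser $\bw^{*}$ of the variational identity above equals $(\noiseinv\dic^\top\dic+\bGamma^{-1})^{-1}\noiseinv\dic^\top\by$, and a direct application of the matrix inversion lemma rewrites this as $\bGamma\dic^\top(\sbl)^{-1}\by=\mean$; evaluating this at the jointly optimal $\hat{\bgamma}$ reproduces the posterior mean expression from Theorem~\ref{theorem:hyper}. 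The main (mild) obstacle is verifying the variational identity cleanly and checking that the min--min interchange remains valid once the convex constraint set for $\bw$ is added, but since $\bgamma$ and $\bw$ live in decoupled constraint sets this is immediate.
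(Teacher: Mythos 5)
Your proposal matches the paper's own argument: both rest on the variational identity $\by^\top(\sbl)^{-1}\by=\min_{\bw}\{\noiseinv\|\by-\dic\bw\|_{2}^{2}+\bw^\top\bGamma^{-1}\bw\}$ (established via the Woodbury identity in the proof of Lemma~\ref{lemma:concave}), form the same joint auxiliary function $\bL_{\bgamma,\bw}(\bgamma,\bw)$, minimise over $\bgamma$ to expose $g_{\sbsb}(\bw)$, graft on the constraints (\ref{convexconstraints}), and read off the posterior mean from the minimiser of the quadratic. Your explicit justification of the min--min interchange and of the rescaling by $\noise$ only makes precise steps the paper leaves implicit; no substantive difference.
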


\begin{proof}
See Section C in the Appendix \cite{tacappendix}.
\end{proof}

Although all the constraint functions are convex in Theorem \ref{theorem:MAP}, we show in the following Lemma that the objective function in \eqref{w-nonconvex} is nonconvex since it is the sum of convex and concave functions.
\begin{lemma}
\label{lemma:gsb}
The penalty function $g_{\sbsb}(\bw)$ in Theorem \ref{theorem:MAP} is a non-decreasing, concave function of $|\bw|$ which promotes sparsity on the weights $\bw$.
\end{lemma}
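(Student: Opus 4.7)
The plan is to exploit the fact that $g_{\sbsb}(\bw)$ depends on $\bw$ only through the componentwise squared magnitudes, so that non-decreasingness and concavity in $|\bw|$ reduce to corresponding properties in the variable $\bu \define (w_1^2,\ldots,w_N^2)=(|w_1|^2,\ldots,|w_N|^2)$. Among the three terms inside the minimisation defining $g_{\sbsb}(\bw)$, only $\bw^\top\bGamma^{-1}\bw=\sum_{j=1}^{N} w_j^2/\gamma_j$ involves $\bw$, and for every fixed feasible $\bgamma\geq\0$ this quantity is affine in $\bu$; the remaining terms $\log|\sbl|$ and $\sum_{j} p(\gamma_j)$ depend on $\bgamma$ alone. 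Hence $g_{\sbsb}(\bw)=\tilde g(|w_1|^2,\ldots,|w_N|^2)$ where $\tilde g(\bu)\define \min_{\bgamma\geq\0}\{\sum_{j} u_j/\gamma_j+\log|\sbl|+\sum_{j} p(\gamma_j)\}$.

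Concavity is then immediate from the elementary fact that the pointwise infimum of an arbitrary family of affine functions is concave: the bracketed expression defining $\tilde g$ is affine in $\bu$ for every feasible $\bgamma$, so $\tilde g$ is concave on $\R^{N}_{\geq 0}$. This is the standard sense in which the SBL penalty is ``concave in $|\bw|$'' throughout the sparse Bayesian literature. Monotonicity is equally direct: for fixed $\bgamma\geq\0$ and each coordinate $j$, $u_j/\gamma_j$ is non-decreasing in $u_j$, so if $\bu\leq \bu'$ componentwise, the inner objective at $(\bu,\bgamma)$ is bounded above by its value at $(\bu',\bgamma)$; taking the minimum over $\bgamma$ preserves this inequality, giving $\tilde g(\bu)\leq \tilde g(\bu')$, and since the map $|w_j|\mapsto |w_j|^2$ is itself non-decreasing on $[0,\infty)$, this transfers to non-decreasingness in $|\bw|$ as claimed.

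For the sparsity-promoting claim, I would appeal to the well-known fact that adding a non-decreasing, concave penalty of the magnitudes to a least-squares term yields minimisers with many exactly-zero coordinates: concavity makes the marginal penalty largest near zero and flattens it out for large $|w_j|$, so the objective is happy to drive unimportant components to exactly zero while barely penalising the significant ones. A mechanical justification is supplied by the majorisation-minimisation scheme of the next subsections: at any current iterate, $\tilde g$ is upper bounded by an affine surrogate in the $\bu$ variables, and iterating this surrogate yields the reweighted $\ell_1$ scheme of Sec.~\ref{sec:cost1}, whose sparsifying behaviour is well documented. The main obstacle in giving a fully rigorous sparsity guarantee is that it depends on the specific choice of hyperprior $\prior(\hyper_j)$ and on $\dic$; for the purposes of this lemma the qualitative structural properties (non-decreasingness and concavity) are what matter, since they are exactly the ingredients exploited by the algorithmic development that follows.
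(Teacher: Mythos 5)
There is a genuine gap in the concavity argument. Your opening reduction --- that ``concavity in $|\bw|$ reduces to the corresponding property in $\bu=(|w_1|^2,\ldots,|w_N|^2)$'' --- is false. What you actually prove (correctly, as a pointwise minimum of functions affine in $\bu$) is that $g_{\sbsb}$ is concave in the \emph{squared} magnitudes. That is a strictly weaker statement: a concave, non-decreasing function of $u$ composed with the convex map $|w|\mapsto|w|^2$ need not be concave in $|w|$ (take $\tilde g(u)=u$, giving $|w|^2$). Concavity in $\bw^2$ is the property that would justify a reweighted $\ell_2$ scheme; the lemma claims concavity in $|\bw|$, which is precisely what licenses the reweighted $\ell_1$ algorithm of Section \ref{sec:cost-constraint-convex}. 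A concrete counterexample to your reduction: with $N=1$ and $h(\gamma)=\gamma^2$ (not concave), one computes $\min_{\gamma>0}\{w^2/\gamma+\gamma^2\}=c\,|w|^{4/3}$, which is \emph{convex} in $|w|$ even though your argument applies verbatim and would declare it concave in $|w|$. This shows the missing ingredient is the concavity of $h(\bgamma)=\log|\sbl|+\sum_j p(\gamma_j)$ (Lemma \ref{lemma:concave}), which your proof never invokes.

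The paper's proof supplies exactly this step via the duality lemma: since $h$ is concave, $h(\bgamma)=\min_{\bgamma^{*}\geq 0}\langle\bgamma^{*},\bgamma\rangle-h^{*}(\bgamma^{*})$, so
\begin{equation}
g_{\sbsb}(\bw)=\min_{\bgamma,\bgamma^{*}\geq\mathbf{0}}\Bigl\{\sum\nolimits_j\bigl(w_j^2/\hyper_j+\hyper_j^{*}\hyper_j\bigr)-h^{*}(\bgamma^{*})\Bigr\},
\end{equation}
and minimising each $w_j^2/\hyper_j+\hyper_j^{*}\hyper_j$ over $\hyper_j$ in closed form (at $\hyper_j=|w_j|/\sqrt{\hyper_j^{*}}$) yields $g_{\sbsb}(\bw)=\min_{\bgamma^{*}\geq\mathbf{0}}\{\sum_j 2\sqrt{\hyper_j^{*}}\,|w_j|-h^{*}(\bgamma^{*})\}$, a minimum of affine functions of $|\bw|$ with nonnegative slopes --- hence concave and non-decreasing in $|\bw|$. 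Your monotonicity argument is fine, and your candid caveat about the sparsity claim matches the paper's informal treatment of that point, but the concavity step needs the second dualisation to be salvaged.
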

\begin{proof}
The proof uses the duality lemma (see Sec. 4.2 in \cite{jordan1999introduction}).
See Section D in the Appendix \cite{tacappendix}.
\end{proof}

\subsection{Lasso Type Algorithm}
\label{sec:cost-constraint-convex}

We define the terms excluding $h^{*}(\bgamma^{*})$ as
\begin{equation}
\begin{aligned}
\bL_{\bgamma^{*}}(\bgamma,\bw)\define   \noiseinv\| \by-\dic\bw\|_{2}^{2}+\sum\nolimits_j\left(\frac{w_j^2}{\hyper_j}+\hyper_j^{*}\hyper_j\right).
\end{aligned}
\end{equation}
For a fixed $\bgamma^{*}$, we notice that $\bL_{\bgamma^{*}}(\bgamma,\bw)$  is jointly convex in $\bw$ and $\bgamma$ and can be globally minimised by solving over $\bgamma$ and then $\bw$.
Since ${w_j^2}/{\hyper_j}+\hyper_j^{*}\hyper_j \geq 2w_j\sqrt{\bgamma^{*}_{j}}$, for any $\bw$,
$\gamma_j={|{w}_j|}/{\sqrt{\gamma^{*}_{j}}}$ minimises $\bL_{\bgamma^{*}}(\bgamma,\bw)$.
When  $\gamma_j={|{w}_j|}/{\sqrt{\gamma^{*}_{j}}}$ is substituted into $\bL_{\bgamma^{*}}(\bgamma,\bw)$, $\hat{\bw}$ can be obtained by solving the following weighted convex $\ell_1$-minimisation procedure
\begin{equation}
\begin{aligned}
\mathbf{\hat{w}}
=\argmin\limits_{\bw}\left\{\| \by-\dic
\bw\|_{2}^{2}+2\noise \sum\nolimits_{j=1}^{N}\sqrt{\bgamma_{j}^{*}}|w_j| \right\}.
\label{wupdate}
\end{aligned}
\end{equation}
We can then set
$
\gamma_j=|\hat{w}_j|/\sqrt{\gamma^{*}_{j}}
\label{gammaupdate}
$, $\forall j$.
As a consequence, $\bL_{\bgamma^{*}}(\bgamma,\bw)$ will be minimised for  any fixed $\bgamma^{*}$.
Due to the concavity of $g_{\sbsb}(\bw)$, the objective function in (\ref{w-nonconvex}) can be optimised using a re-weighted $\ell_1$-minimisation in a similar way as was considered in (\ref{wupdate}). The updated weight at the $k^{th}$ iteration is then given by
$
u_j^{(k)}\define \left.\frac{\partial g_{\sbsb}(\bw)}{2\partial |w_j|}\right|_{\bw=\bw^{(k)}}=\sqrt{\bgamma^{*}_j}.
$

We can now explain how the update of the parameters can be performed based on the above. We start by setting the iteration count $k$ to zero and $u_j^{(0)}=1, \ \forall j$. At this stage, the solution is a typical $\ell_1$-minimisation solution. Then at the $k^{th}$ iteration, we initialise $u_j^{(k)}=\sqrt{\bgamma^{*(k)}_j}$, $\forall j$ and then minimise over $\bgamma$ using $\hyper_j=|w_j|/\sqrt{\hyper^{*}_{j}},\ \forall j$.
Consider again $\bL_{\bgamma,\bw}(\bgamma,\bw)$. For any fixed $\bgamma$ and $\bw$, {the tightest bound} can be obtained by minimising over $\bgamma^{*}$. The tightest value of $\bgamma^{*}=\hat{\bgamma^{*}}$ equals the gradient of the function $h(\bgamma)\define \log|\sbl|+\sum\nolimits_{j=1}^{N}p(\hyper_j)$ defined in Lemma \ref{lemma:concave} at the current $\bgamma$. $\bgamma^{*}$ has the following analytical expression:
$
\hat{\bgamma^{*}}
=\nabla_{\bgamma}\left( \log |\sbl|+\sum\nolimits_{j=1}^{N}p(\hyper_j) \right)
= \diag \left[\dic^\top\left(\sbl\right)^{-1}{\dic} \right]+p'(\bgamma),
$
where $p'(\bgamma)=\left[p'(\hyper_1),\ldots,p'(\hyper_N)\right]^\top$. The optimal $\bgamma^{*(k+1)}$ can then be obtained as
$
\bgamma^{*(k+1)}= \diag \left[\dic^\top\left(\noise\mathbf{I}+\dic \mathbf{\Gamma}^{(k)} \dic^\top\right)^{-1}{\dic} \right]+p'(\bgamma^{(k)}).
$
After computing the estimation of ${\hyper_j}^{(k)}={|w_j^{(k)}|}/{\sqrt{\hyper^{*(k)}_{j}}}$,
we can compute $\bgamma^{*(k+1)}$, which gives
$
\hyper_j^{*(k+1)}= \dic_j^\top\left(\noise\mathbf{I}+\dic \mathbf{U}^{(k)} \mathbf{W}^{(k)} \dic^\top\right)^{-1}{\dic_j}+p^{\prime}(\hyper_j^{(k)}),
$
where
$
\mathbf{{\Gamma}}^{(k)}\define \diag\left[{\bgamma}^{(k)}\right]$, $
\mathbf{U}^{(k)}\define \diag\left[\mathbf{u}^{(k)}\right]^{-1}=\diag\left[\sqrt{\bgamma^{*(k)}}\right]^{-1} $,
$
\mathbf{W}^{(k)}\define \diag\left[|\bw^{(k)}|\right].
$
We can then define
$
u_j^{(k+1)}\define \sqrt{\gamma_j^{*(k+1)}}
$
for the next iteration of the weighted $\ell_1$-minimisation. The above described procedure is summarised in Algorithm \ref{alg:weight}.
\begin{algorithm}[!]
\caption{Nonlinear Identification Algorithm}
\label{alg:weight}
\begin{algorithmic}[1]
	\State Collect time series data from the system of interest (assuming the system can be described by~\eqref{eq:expansion});
	\State Select the candidate dictionary functions that will be used to construct the dictionary matrix described in Section~\ref{sec:probstate};
	\State Initialise $u_j^0=1, \ \forall j$
	\For  {$k=0, \ldots, k_{\max}$}
	\State Solve the weighted $\ell_1$-minimisation problem with convex constraints on $\bw$
	\begin{equation}
	\begin{split}
	\min_{\bw} \| \by-\dic\bw\|_{2}^{2}+2\noise\sum\nolimits_{j}u_j^{(k)}|w_j|, \,\,\,\,\, \mathrm{subject}\,\,\mathrm{to}  \,\,
	(\ref{convexconstraints}); \notag
	\end{split}
	\end{equation}
	\State Set $\mathbf{U}^{(k)}\define \diag\left[\mathbf{u}^{(k)}\right]^{-1}, \
	\mathbf{W}^{(k)}\define \diag\left[|\bw^{(k)}|\right];$
	\State Update weights $u_j^{(k+1)}$ for the next iteration 
	$ u_j^{(k+1)}=\left[\dic_j^\top\left(\noise\mathbf{I}+\dic \mathbf{U}^{(k)} \mathbf{W}^{(k)} \dic^\top\right)^{-1}{\dic_j}+p^{\prime}(\hyper_j^{(k)})\right]^{{1}/{2}}$;
	
	\If   {a stopping criterion is satisfied}
	\State Break;
	\EndIf
	\EndFor
\end{algorithmic}
\end{algorithm}

\begin{remark}
\label{remark:threshold}
There are two important aspects of the re-weighted $\ell_1$-minimisation algorithm presented in Algorithm~\ref{alg:weight}. First, for convex optimisation, there will be no exact zeros during the iterations and strictly speaking, we will always get a solution without any zero entry even when the RIP condition holds. However, some of the estimated weights will have very small magnitudes compared to those of other weights, e.g., $\pm 10^{-5}$ compared to $1$, or the ``energy'' some of the estimated weights will be several orders of magnitude lower than the average ``energy'', e.g., $\|w_j\|_2^2 \ll \|\bw\|_2^2$. Thus a threshold needs to be defined \emph{a priori} to prune ``small'' weights at each iteration. The second aspect concerns the computational complexity of this approach. The repeated execution of Algorithm \ref{alg:weight} is very cheap computationally since it scales as $\mathcal{O}(MN\|\bw^{(k)}\|_0)$ (see \cite{candes2008enhancing, wipf2010iterative}). Since at each iteration certain weights are estimated to be zero, certain dictionary functions spanning the corresponding columns of $\dic$ can be pruned out for the next iteration.
\end{remark}

\subsection{Convergence}

It is natural to investigate the convergence properties of this iterative re-weighted $\ell_1$-minimisation procedure. Let $\bA(\cdot)$ denote a mapping that assigns to every point in $\R^{N}_{+}$ the subset of  $\R^{N}_{+}$ which satisfies Steps 5 and 6 in Algorithm \ref{alg:weight}. Then the convergence property can be established as follows:

\begin{theorem}
\label{theorem:convergence}
Given the initial point $\bgamma^{(0)}\in \R^{n}_{+}$ a sequence $\{\bgamma^{(k)}\}_{k=0}^{\infty}$ is generated such that
$
\bgamma^{(k+1)}\in \bA(\bgamma^{(k)}), \, \forall k.
$
This sequence is guaranteed to converge to a local minimum (or saddle point) of ${\bL_{\bgamma}}$ in (\ref{hypercostfunction}).
\end{theorem}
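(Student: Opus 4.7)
The plan is to view Algorithm \ref{alg:weight} as a majorisation--minimisation (MM) scheme for ${\bL_{\bgamma}}(\bgamma)$ in (\ref{hypercostfunction}), and then invoke Zangwill's global convergence theorem to pass from monotone descent to convergence to a stationary point.

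First, I would rewrite ${\bL_{\bgamma}}(\bgamma)$ as the sum of the data-dependent term $\by^{\top}(\sbl)^{-1}\by$ and the regulariser $h(\bgamma) \define \log|\sbl| + \sum_{j}p(\hyper_j)$. By Lemma \ref{lemma:concave} (and the discussion preceding Step 7 of the algorithm), $h(\bgamma)$ is concave in $\bgamma$, so its concave conjugate $h^{*}$ satisfies
\begin{equation*}
h(\bgamma) \;=\; \min_{\bgamma^{*}\ge\mathbf{0}}\bigl\{\,\bgamma^{*\top}\bgamma - h^{*}(\bgamma^{*})\bigr\},
\end{equation*}
with the minimiser given by $\bgamma^{*} = \nabla h(\bgamma)$, which is exactly the update in Step 7. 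Similarly, the data term $\by^{\top}(\sbl)^{-1}\by$ admits the variational representation (obtained from completing the square and the matrix inversion lemma as in the proof of Theorem \ref{theorem:hyper})
\begin{equation*}
\by^{\top}(\sbl)^{-1}\by \;=\; \min_{\bw}\Bigl\{\,\noiseinv\|\by-\dic\bw\|_{2}^{2}+\sum\nolimits_{j}w_{j}^{2}/\hyper_j\Bigr\}.
\end{equation*}
Combining the two bounds yields a jointly convex upper surrogate $Q(\bgamma;\bgamma^{(k)},\bw^{(k)})\ge {\bL_{\bgamma}}(\bgamma)$ which is tight at $\bgamma=\bgamma^{(k)}$ once $\bgamma^{*(k)}$ and $\bw^{(k)}$ are chosen as in Steps 5--7.

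Next, I would establish the monotone descent property. Steps 5 and 6 of Algorithm \ref{alg:weight} together perform the exact joint minimisation of $\bL_{\bgamma^{*(k)}}(\bgamma,\bw)$ over $\bw$ and $\bgamma$ (using the closed-form $\hyper_j=|w_j|/\sqrt{\hyper_j^{*(k)}}$ derived in Section \ref{sec:cost-constraint-convex}), and Step 7 then tightens the bound by updating $\bgamma^{*}$ to the supporting hyperplane at $\bgamma^{(k+1)}$. Chaining the two inequalities gives
\begin{equation*}
{\bL_{\bgamma}}(\bgamma^{(k+1)}) \;\le\; Q(\bgamma^{(k+1)};\bgamma^{(k)},\bw^{(k)}) \;\le\; Q(\bgamma^{(k)};\bgamma^{(k)},\bw^{(k)}) \;=\; {\bL_{\bgamma}}(\bgamma^{(k)}),
\end{equation*}
with equality throughout only when $\bgamma^{(k)}$ is a stationary point of ${\bL_{\bgamma}}$, since the inner minimisation is strict otherwise. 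Because ${\bL_{\bgamma}}$ is bounded below (the log-determinant term dominates the quadratic and the hyperprior $p(\hyper_j)$ is assumed proper), $\{{\bL_{\bgamma}}(\bgamma^{(k)})\}$ is a convergent real sequence.

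Finally, I would invoke Zangwill's global convergence theorem applied to the point-to-set map $\bA$. The three hypotheses to verify are: (i) all iterates lie in a compact set, which follows from the coercivity of ${\bL_{\bgamma}}$ on $\R^{N}_{+}$ together with the descent property (sublevel sets are bounded away from both $\mathbf{0}$ and $\infty$ by inspection of the log-determinant and the $\bw^{\top}\bGamma^{-1}\bw$ terms); (ii) the descent function is ${\bL_{\bgamma}}$ itself, which is continuous and strictly decreases outside the solution set $\Omega$ of stationary points, as argued above; and (iii) $\bA$ is closed on the complement of $\Omega$, which follows from continuity of the closed-form expressions defining Steps 5--7 (the weighted $\ell_1$ subproblem in Step 5 has a unique solution almost everywhere, and Steps 6--7 are continuous single-valued updates). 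Zangwill's theorem then yields that every limit point of $\{\bgamma^{(k)}\}$ lies in $\Omega$, i.e., is a local minimum or saddle point of ${\bL_{\bgamma}}$.

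The main obstacle I anticipate is verifying condition (iii), closedness of $\bA$: the weighted $\ell_1$-minimisation in Step 5 can in principle be set-valued when several solutions share the same objective, and the convex constraints (\ref{convexconstraints}) must be handled carefully to ensure the feasible set is nonempty and regular enough for the argmin map to remain outer semi-continuous. A clean way around this is to argue that, generically, the solution is unique (as noted in Remark \ref{remark:threshold}, the $\ell_1$ subproblem produces no exact zeros for finite $u_j^{(k)}$), so $\bA$ is effectively single-valued and continuous, and closedness then follows immediately.
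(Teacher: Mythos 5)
Your proposal follows essentially the same route as the paper: both cast the iteration as a bound-optimisation (MM) scheme built from the concave-conjugate representation of $h(\bgamma)$ and the variational form of the data term, and both conclude by verifying the three hypotheses of Zangwill's Global Convergence Theorem (compactness of the iterates, strict descent at non-stationary points via tangency of the auxiliary function and strict concavity of $\log|\cdot|$, and closedness of the point-to-set map $\bA$). Your treatment is in fact somewhat more careful than the paper's, which asserts closedness of $\bA$ without the caveat you raise about possible non-uniqueness of the weighted $\ell_1$ subproblem.
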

\begin{proof}
The proof is in one-to-one correspondence with that of the Global Convergence Theorem \cite{zangwill1969nonlinear}.
See Section E in the Appendix \cite{tacappendix}.
\end{proof}

\section{Illustrative Numerical Examples}\label{sec:examples}

To implement Algorithm~\ref{alg:weight}, we use CVX, a popular package for specifying and solving convex programs \cite{grant2008cvx}.
To illustrate our results, the approach is applied to two classic examples: (1) the Genetic Repressilator Network, where we identify nonlinear regulation relationships between genes, transcriptional and translational strengths and degradation rates, and (2) a network of Kuramoto Oscillators, where we identify the network topology and nonlinear coupling functions. More details about these two examples and algorithmic comparisons with other algorithms described in \cite{sparselab} in terms of the Root of the Normalised Mean Square Error (RNMSE) and computational running time for different Signal-to-Noise Ratios (SNR) can be found in the supplementary material \cite{tacappendix}.
Importantly, this comparison shows that Algorithm~\ref{alg:weight} outperforms other classical algorithms \cite{sparselab} in terms of RNMSE, when used to identify the nonlinear systems associated with these illustrative examples.

\section{CONCLUSION AND DISCUSSION}\label{sec:conclusion}
This technical note proposed a new method for the identification of nonlinear discrete-time state-space systems with additive process noise. This method only required time-series data and some prior knowledge about the type of system from which these data have been acquired (e.g.,\ biochemical, mechanical or electrical). Based on this prior knowledge, candidate nonlinear functions (dictionary functions) can be selected for the particular type of system to be identified.

Due to the typical sparsity in terms of number of dictionary functions used to describe the dynamics of nonlinear systems and the fact that the number of measurements is typically small (at least smaller than the number of candidate nonlinear functions), the corresponding identification problem falls into the class of sparse linear regression problems. We considered this problem in a Bayesian framework and solved it efficiently using an iterative re-weighted $\ell_1$-minimisation algorithm. This approach also allowed us to easily add convex constraints from prior knowledge of some properties of the system (e.g.,\ positivity of certain variables, stability of the system, etc.). Finally, we illustrated how this approach can be efficiently used to accurately reconstruct discrete-time nonlinear models of the genetic repressilator and of Kuramoto networks.

Several important questions remain currently open for further research. Possibly, the most important is the assumption that the system is fully measurable. Typically, only part of the state is measured~\cite{yuan2011robust,yuan2012decentralised}, and, in particular, the number of hidden/unobservable nodes and their position in the network are usually unknown. We are currently investigating partial-measurement extensions of the method presented in this technical note.  Meanwhile, our algorithm is relatively more computationally expensive than other algorithms such as those in \cite{sparselab} but outperforms them all in terms of the accuracy of the identification as measured by the RNMSE. In future work, we plan to improve our proposed algorithm by exploiting further the structure of the optimisation problem at hand and reducing the associated algorithmic complexity. Another issue is that we assume that only process noise is present, and thus do not directly take into account measurement noise. We are currently working on an extension of the method allowing the incorporation of measurement noise into the presented framework.

\section{ACKNOWLEDGEMENT}
The authors gratefully acknowledge the support of Microsoft Research through the PhD Scholarship Program of Mr Wei Pan. Dr Ye Yuan and Dr Jorge Gon\c{c}alves acknowledge the support from EPSRC (project EP/I03210X/1). Dr Jorge Gon\c{c}alves also acknowledges the support from FNR and ERASysAPP. Dr Guy-Bart Stan gratefully acknowledges the support of the EPSRC Centre for Synthetic Biology and Innovation at Imperial College London (project EP/G036004/1) and the EPSRC Fellowship for Growth (project EP/M002187/1). The authors would like to thank Dr Wei Dai, Prof Mauricio Barahona and Dr Aivar Sootla (Imperial College London) for helpful discussions.

\clearpage
\onecolumn
\begin{center}
{\Large \textbf{\appendix}}
\end{center}
\numberwithin{equation}{subsection}

\subsection{Proof of Theorem \ref{theorem:hyper}}
\label{proof:1}
We first re-express $\mean$ and $\variance$ in (\ref{mean}) and (\ref{variance}) using the Woodbury inversion identity:
\begin{eqnarray}
\mean&=& \bGamma \dic^\top (\sbl)^{-1} \by=\noiseinv\variance\dic^\top\by, \label{mean2} \\
\variance&=& \bGamma- \bGamma \dic^\top (\sbl)^{-1} \dic \bGamma=(\bGamma^{-1}+\noiseinv\dic^\top \dic)^{-1}. \label{variance2}
\end{eqnarray}
Since the data likelihood $\Prob(\by|\bw)$ is Gaussian, we can write the integral for the marginal likelihood in (\ref{mass}), as
\begin{equation}
\begin{aligned}
&\int \mathcal{N}(\by|{\dic} {\bw},\noise\bI)\prod_{j=1}^{N}\bN(w_j|0,\hyper_j)\prior(\hyper_j)d\bw \\
=&\left(\frac{1}{2\pi\noise}\right)^{M/2}\left(\frac{1}{2\pi}\right)^{N/2}
\int \exp\{-E(\bw)\}d\bw \prod_{j=1}^{N}\frac{\prior(\hyper_j)}{\sqrt{\gamma_j}},
\label{integral}
\end{aligned}
\end{equation}
where $E(\bw)=\frac{1}{2\noise}\|\by-\dic\bw\|^2+\frac{1}{2}\bw^\top\bGamma^{-1}\bw, \ \bGamma=\diag(\bgamma)$. Equivalently, we get
\begin{eqnarray}
E(\bw)=\frac{1}{2}(\bw-\mean)^\top\variance^{-1}(\bw-\mean)+E(\by),
\label{integral2}
\end{eqnarray}
where $\mean$ and $\variance$ are given by (\ref{mean2}) and (\ref{variance2}). Using the Woodbury inversion identity, we obtain:
\begin{equation}
\begin{aligned}
E(\by) =\frac{1}{2}\left(\noiseinv\by^\top\by-\noiseinv\by^\top\dic\variance\variance^{-1}\variance\dic^\top\by\noiseinv\right) =\frac{1}{2} \by^\top(\sbl)^{-1}\by.
\label{data-dependent-term0}
\end{aligned}
\end{equation}
Using (\ref{integral2}), we can evaluate the integral in (\ref{integral}) to obtain
$$\int \exp\{-E(\bw)\}d\bw=\exp\{-E(\by)\}(2\pi)^{N/2}|\variance|^{1/2}.$$
Exploiting the determinant identity, we have $|\bGamma^{-1}||\sbl | =|\noise\bI| | \bGamma^{-1}+\noiseinv\dic^\top \dic |$, 
from which we can compute the first term in (\ref{hypercostfunction}) as
$\log \left\vert \sbl \right\vert =-\log |\variance|+M\log\noise+\log  |\bGamma|$.
Then applying a $-2\log(\cdot)$ transformation to (\ref{integral}), we have
\begin{equation}
\begin{aligned}
&-2\log \int \Prob(\by|\bw)\prod_{j=1}^{n}\bN(w_j|0,\hyper_j)\prior(\hyper_j)d\bw  \notag \\
= &-\log |\variance|+M\log2\pi \noise+\log  |\bGamma|+\by^\top(\sbl)^{-1}\by+\sum\nolimits_{j=1}^{N}p(\hyper_j) \notag \\
= &  \log \left\vert \sbl \right\vert+M\log 2\pi \noise +\by^\top(\sbl)^{-1}\by+\sum\nolimits_{j=1}^{N}p(\hyper_j).
\label{integral3}
\end{aligned}
\end{equation}
From (\ref{mass}), we then obtain $\hat{\bgamma} = \arg \min\limits_{\bgamma\geq\mathbf{0}} \{\log \left\vert \sbl\right\vert +\by^\top(\sbl)^{-1}\by+\sum\nolimits_{j=1}^{N}p(\hyper_j)\}.$ We compute the posterior mean to get an estimate of $\bw$: $\hat{\bw}=\E(\bw|\by;\hat{\bgamma})=\hat{\bGamma} \dic^\top (\noise\mathbf{I+{\dic}\hat{\bGamma}}{{\dic}}^\top)^{-1} \by$ where $\hat{\bGamma}=\diag[\hat{\bgamma}]$.

\subsection{Proof of Lemma \ref{lemma:concave}}
\label{proof:2}
We first show that the data-dependent term in (\ref{hypercostfunction}) is convex in $\bw$ and $\bgamma$. From (\ref{mean2}), (\ref{variance2}) and (\ref{data-dependent-term0}), the data-dependent term can be re-expressed as
\begin{equation}
\begin{aligned}
&\by^\top\left(\sbl\right)^{-1}\by  \\
=& \noiseinv\by^\top\by-\noiseinv\by^\top\dic\variance \dic^\top\noiseinv\by \\
=&  \noiseinv\|\by-\dic\mean\|_2^2+ \mean^\top\bGamma^{-1}\mean\\
=&\min_{\bx} \{\noiseinv\| \by-\dic\bx\|_{2}^{2}+\bx^\top\bGamma^{-1}\bx\},
\label{data-dependent-term}
\end{aligned}
\end{equation}
where $\mean$ is the posterior mean defined in (\ref{mean}). It can easily be shown that the minimisation problem is convex in $\bw$ and $\bgamma$, where  $\bGamma\define \diag[\bgamma]$.

Next we define $h(\bgamma)\define \log|\sbl|+\sum\nolimits_{j=1}^{N}p(\hyper_j) \label{hr}$, and show $h(\bgamma)$  is a concave function with respect to~$\bgamma$. $\log|\cdot|$ is concave in the space of positive semi-definite matrices. Moreover, $\sbl$ is an affine function of $\bgamma$ and is positive semidefinite for any $\bgamma\geq 0$. This implies that $\log \left\vert \sbl \right\vert$ is a concave, nondecreasing function of $\bgamma$.  Since we adopt a super-Gaussian prior with potential function $\prior(\hyper_j), \forall j,$ as described in Proposition \ref{super-Gaussian}, a direct consequence is that $p(\hyper_j)=-\log\prior(\hyper_j)$ is concave.

\subsection{Proof of Theorem \ref{theorem:MAP}}
\label{proof:3}
Using the data-dependent term in (\ref{data-dependent-term}), together with $\bL_{\bgamma}(\bgamma)$  in (\ref{hypercostfunction}), we can create a strict upper bounding auxiliary function on $\bL_{\bgamma}(\bgamma)$ as $\bL_{\bgamma,\bw}(\bgamma,\bw)= \noiseinv\| \by-\dic\bw\|_{2}^{2}+\bw^\top\bGamma^{-1}\bw+\log|\sbl|+\sum\nolimits_{j=1}^{N}p(\hyper_j)$. When we minimise over $\bgamma$ instead of $\bw$, we obtain
\begin{eqnarray}
\bL_{\bw}(\bw)
&\define& \min\limits_{\bgamma\geq\mathbf{0}}\bL_{\bgamma,\bw}(\bgamma,\bw) \notag \\
&=& \noiseinv\| \by-\dic\bw\|_{2}^{2}+\min\limits_{\bgamma\geq \mathbf{0}}\{\bw^\top\bGamma^{-1}\bw+\log|\sbl|+\sum\nolimits_{j=1}^{N}p(\hyper_j)\}.
\end{eqnarray}
Then for $\bw$ with convex constraints as described in Assumption \ref{assumption-constraints}, we obtain the formulation in Theorem~\ref{theorem:MAP}.

From the derivations in (\ref{data-dependent-term}), we can clearly see that the estimate of the stochastic variable $\bw$ is the poseterior mean $\mean$ defined in (\ref{mean}).

\subsection{Proof of Lemma \ref{lemma:gsb}}
\label{proof:4}

It is shown in Lemma \ref{lemma:concave} that $h(\bgamma)$ is concave with respect to $\bgamma\geq 0$. According to the duality lemma (see Sec. 4.2 in \cite{jordan1999introduction}), we can express the concave function $h(\bgamma)$ as
$
h(\bgamma)
=\min_{\bgamma^{*}\geq 0}
\left<\bgamma^{*}, \bgamma\right>-h^{*}(\bgamma^{*}),
$
where $h^{*}(\bgamma^{*})$ is defined as the concave conjugate of
$h(\bgamma)$ and is given by
$
h^{*}(\bgamma^{*})
=\min_{\bgamma\geq 0} \left<\bgamma^{*}, \bgamma\right>-h(\bgamma).
$

From the proof of Lemma \ref{lemma:concave}, the data-dependent term $\by^\top\left(\sbl\right)^{-1}\by$ can be re-expressed as
$\min_{\bw} \{\noiseinv\| \by-\dic\bw\|_{2}^{2}+\bw^\top\bGamma^{-1}\bw\}$.
Therefore we can create a strict upper bounding auxiliary function $\bL_{\bgamma,\bw}(\bgamma,\bw)$ on $\bL_{\bgamma}(\bgamma)$ in (\ref{hypercostfunction}) by considering the fact that, in the dual expression,
$
\bL_{\bgamma,\bw}(\bgamma,\bw)
\define \left<\bgamma^{*}, \bgamma\right>-h^{*}(\bgamma^{*})+\by^\top\left(\sbl\right)^{-1}\by
= \noiseinv\| \by-\dic\bw\|_{2}^{2}+\sum\nolimits_j\left({w_j^2}/{\hyper_j}+\hyper_j^{*}\hyper_j\right)-h^{*}(\bgamma^{*}).
$
We can then re-express $g_{\sbsb}(\bw)$ as
\begin{equation}
g_{\sbsb}(\bw)=\min\limits_{\bgamma,\bgamma{*}\geq \mathbf{0}}\left\{\sum\nolimits_j\left({w_j^2}/{\hyper_j}+\hyper_j^{*}\hyper_j\right)-h^{*}(\bgamma^{*})\right\}.
\label{gsbl-dual1}
\end{equation}
$g_{\sbsb}(\bw)$ is minimised over $\bgamma$ when
$
\hyper_j=|w_j|/\sqrt{\hyper^{*}_{j}},\ \forall j.
$
Substituting this expression into $g_{\sbsb}(\bw)$, we get
\begin{equation}
g_{\sbsb}(\bw)=\min\limits_{\bgamma^{*}\geq \mathbf{0}}\left\{\sum\nolimits_j2\sqrt{\hyper^{*}_{j}}|w_j|-h^{*}(\bgamma^{*})\right\}.
\label{gsbl-dual2}
\end{equation}
This indicates that $g_{\sbsb}(\bw)$ can be represented as a minimum over upper-bounding hyperplanes in $\|\bw\|_1$, and thus must be concave. $g_{\sbsb}(\bw)$ thus promotes sparsity. Moreover, $g_{\sbsb}(\bw)$ must be non-decreasing since $\bgamma^{*}\geq \mathbf{0}$.

\subsection{Proof of Theorem \ref{theorem:convergence}}
\label{proof:convergence}

The proof is in one-to-one correspondence with that of the Global Convergence Theorem \cite{zangwill1969nonlinear}.
\begin{enumerate}
\item The mapping $\bA(\cdot)$ is compact. Since any element of $\bgamma$ is bounded, ${\bL}\left( \bgamma\right)$ will not diverge to infinity.
In fact, for any fixed $\by$, $\dic$ and $\bgamma$, there will always exist a radius $r$ such that for any $\|\bgamma^{(0)}\|\leq 0$, $\|\bgamma^{(k)}\|\leq 0$.
\item We denote $\bgamma'$ as the non-minimising point of $\bL(\bgamma'')<\bL(\bgamma')$, $\forall$ $\bgamma'' \in \bA(\bgamma')$. At any non-minimising $\bgamma'$ the auxiliary objective function $\bL_{(\bgamma^{*})'}$ obtained from $\bgamma^{*}_{\sbsb}$ will be strictly tangent to $\bL(\bgamma)$ at $\bgamma'$. It will therefore necessarily have a minimum elsewhere since the slope at $\bgamma'$ is nonzero by definition. Moreover, because the $\log |\cdot|$ function is strictly concave, at this minimum the actual cost function will be reduced still further. Consequently, the proposed updates represent a valid descent function \cite{zangwill1969nonlinear}.
\item $\bA(\cdot)$ is closed at all non-stationary points.
\end{enumerate}


\newpage
\clearpage
\onecolumn

\clearpage

\begin{center}
	{\Large \textbf{Supplementary Material}}
\end{center}

\makeatletter
\renewcommand{\thesection}{S\arabic{section}}   
\renewcommand{\thesubsection}{S\arabic{subsection}}   
\renewcommand{\thetable}{S\arabic{table}}   
\renewcommand{\thefigure}{S\arabic{figure}}

\section{A Motivating Example}
\label{example:motivate}
In this example of a system with states and inputs memories, we will show how to construct the expanded dictionary matrix by adding candidate nonlinear functions.
\begin{example}
\label{example:NARX3}
As an illustrative example, we consider  the following model of polynomial terms for a single-input single-output (SISO) nonlinear autoregressive system with exogenous input (NARX model) \cite{leontaritis1985input}:
\begin{equation}
\begin{aligned}
x(t_{k+1})=0.7x^5(t_{k})x(t_{k-1})-0.5x(t_{k-2})
+0.6u^4(t_{k-2})-0.7x(t_{k-2})u^2(t_{k-1})
+\xi(t_k),
\label{NARX1}
\end{aligned}
\end{equation}
with $x, u, \xi \in {\mathbb{R}}$.
We can write \eqref{NARX1} in extended form as:
\begin{equation}
\begin{aligned}
x(t_{k+1})&=w_1+w_2x(t_k)+\ldots+w_{m_x+2}x(t_{k-m_x})+\ldots+w_{N}x^{d_x}(t_{k-m_x})u^{d_u}(t_{k-m_u})+\xi(t_k) \\
&= \bw^\top\bF(x(t_{k}),\ldots, x(t_{k-m_x}), u(t_{k}),\ldots,u(t_{k-m_u}))+\xi(t_{k}),
\label{NARX3}
\end{aligned}
\end{equation}
where $d_x$ (resp. $d_u$) is the degree of the output (resp. input); $m_x$ (resp. $m_u$) is the maximal memory order of the output (resp. input); $\bw^\top=[w_{1},\ldots,w_{N}]\in \R^{N}$ is the weight vector; and $\bF(\bx(t_{k}),\ldots, \bx(t_{k-m_\bx}), \bu(t_{k}),\ldots,\bu(t_{k-m_\bu}))=[f_{1}(\cdot),
\ldots,f_{N}(\cdot)]^\top\in \R^{N}$ is the dictionary functions vector.
By identification of \eqref{NARX3} with the NARX model (\ref{NARX1}), we can easily see that $d_{x}=5$, $d_{u}=4$,  $m_{x}=2$, $m_{u}=2$.
To define the dictionary matrix, we consider all possible monomials up to degree $d_{x}=5$ (resp. $d_u=4$) and up to memory order $m_x=5$ (resp. $m_u=2$) in $x$ (resp. $u$).
This yields $\bF(\cdot) \in \R^{1960}$ and thus $\bw \in \R^{1960}$. Since $\bv \in \R^{4}$, only 4 out of the 1960 associated weights $w_i$ are nonzero. 
\end{example}

\section{Illustrative Numerical Examples}\label{sec:examples}
\subsection{Experiment setup}

We hereafter present two classic examples that we use to illustrate our proposed method and on which we apply Algorithm~\ref{alg:weight}. 
To implement Algorithm~\ref{alg:weight}, we use CVX, a popular package for specifying and solving convex programs \cite{grant2008cvx}. The algorithm is implemented in MATLAB R2013a. The calculations were performed on a standard laptop computer (Intel Core i5 2.5GHz with 8GB RAM). In the examples that follow, we set the pruning threshold (mentioned in Remark~\ref{remark:threshold}) to $10^{-4}$, i.e., $\|w_j\|_2^2 / \|\bw\|_2^2<10^{-4}$.

\subsection{Two classic examples}
\begin{example}
\label{example:o}
In this example, we consider a classical dynamical system in systems/synthetic biology, the repressilator, which we use to illustrate the reconstruction problem at hand.
The repressilator is a synthetic three-gene regulatory network where the dynamics of mRNAs and proteins follow an oscillatory behaviour \cite{elowitz2000son}.
A discrete-time mathematical description of the repressilator, which includes both
transcription and translation dynamics, is given by the following set of discrete-time equations:
\begin{equation}
\begin{aligned}
x_{1}(t_{k+1}) &=x_{1}(t_{k})+(t_{k+1}-t_{k})\left[-\hyper _{1}x_{1}(t_{k})+\frac{\alpha _{1}}{(1+x_{6}^{n_{1}}(t_{k}))}\right]+\xi_1(t_{k}),  \notag \\
x_{2}(t_{k+1}) &=x_{2}(t_{k}) +(t_{k+1}-t_{k})\left[-\hyper_{2}x_{2}(t_{k})+\frac{\alpha _{2}}{(1+x_{4}^{n_{2}}(t_{k}))}\right]+\xi_2(t_{k}),  \notag \\
x_{3}(t_{k+1})&=x_{3}(t_{k})+(t_{k+1}-t_{k})\left[-\hyper _{3}x_{3}(t_{k})+\frac{\alpha _{3}}{(1+x_{5}^{n_{3}}(t_{k}))}\right]+\xi_3(t_{k}),
\notag \\
x_{4}(t_{k+1}) &=x_{4}(t_{k}) +(t_{k+1}-t_{k})\left[-\hyper _{4}x_{4}(t_{k})+\beta _{1}x_{1}(k)\right]+\xi_4(t_{k}),  \notag \\
x_{5}(t_{k+1})&=x_{5}(t_{k}) +(t_{k+1}-t_{k})\left[-\hyper _{5}x_{5}(k)+\beta _{2}x_{2}\right]+\xi_5(t_{k}),  \notag \\
x_{6}(t_{k+1}) &=x_{6}(t_{k}) +(t_{k+1}-t_{k})\left[-\hyper _{6}x_{6}(t_{k})+\beta _{3}x_{3}(t_{k})\right]+\xi_6(t_{k}).
\label{oscillator}
\end{aligned}
\end{equation}
Here, $x_{1},x_{2},x_{3}$ (resp. $x_{4},x_{5},x_{6}$) denote the concentrations of the mRNA transcripts (resp. proteins) of genes 1, 2, and 3, respectively.
$\xi_i$, $\forall i$ are i.i.d. Gaussian noise.
$\alpha _{1},\alpha
_{2},\alpha _{3}$ denote the maximum promoter strength for their corresponding gene,
$\hyper_{1},\hyper _{2},\hyper _{3}$ denote the mRNA degradation rates, $\hyper
_{4},\hyper _{5},\hyper _{6}$ denote the protein degradation rates, $\beta
_{1},\beta _{2},\beta _{3}$ denote the protein production rates, and
$n_{1},n _{2},n _{3}$ the Hill coefficients.
The set of equations in (\ref{oscillator}) corresponds to a topology where gene $1$ is repressed by gene $2$,
gene $2$ is repressed by gene $3$, and gene $3$ is repressed by gene $1$.
Take gene 1 for example. The hill coefficient $n_{1}$ will typically have a value within a range from $1$ to $4$ due to biochemical constraints. The core question here is:
how can we determine the topology and kinetic parameters of the set of equations in (\ref{oscillator}) from time series data of $x_{1}, \dots, x_{6}$?

Note that we do not assume \emph{a priori} knowledge of the form of the nonlinear functions appearing on the right-hand side of the equations in \eqref{oscillator}, e.g., whether the degradation obeys first-order or enzymatic catalysed dynamics or whether the proteins are repressors or activators. It should also be noted that many linear and nonlinear functions can be used to describe the dynamics of GRNs in terms of biochemical kinetic laws, e.g., first-order functions $f(\left[ S\right] )=\left[ S\right]$, mass action functions $f(\left[ S_{1}\right] ,\left[ S_{2}\right] )=\left[ S_{1}\right]\cdot \left[ S_{2}\right]$,  Michaelis-Menten functions $f(\left[ S\right])=V_{\max }\left[ S\right] /(K_{M}+\left[ S\right] )$, or Hill functions $f(\left[ S\right] )=V_{\max }\left[ S\right] ^{n}/(K_{M}^{n}+[S]^{n})$.
These kinetic laws typical of biochemistry and GRN models will aid in the definition of the dictionary function matrix.
Next we show how the network construction problem of the repressilator model in \eqref{oscillator} can be formulated in a linear regression form.

Following the procedure described in Section II 
of the main text, we construct a candidate dictionary matrix $\dic$, by selecting as candidate basis functions, nonlinear functions typically used to represent terms appearing in biochemical kinetic laws of GRN models.
As a proof of concept, we only consider Hill functions as potential nonlinear candidate functions. The set of Hill functions with Hill coefficient $h$, both in activating and repressing from, for each of the $6$ state variables are:
\begin{equation}
\begin{aligned}
\text{hill}_h(t_{k}) &\define &\left[ \frac{1}{1+x_{1}^{h}(t_{k})},\ldots ,
\frac{1}{1+x_{6}^{h}(t_{k})}, \frac{x_{1}^{h}(t_{k})}{1+x_{1}^{h}(t_{k})},\ldots ,\frac{x_{6}^{h}(t_{k})}{1+x_{6}^{h}(t_{k})}\right]_{1\times 12}, 
\end{aligned}
\end{equation}
where $h$ represents the Hill coefficient. In what follows we consider
that the Hill coefficient can take any of the following integer values: $1$, $2$, $3$ or $4$. Since there are 6 state variables, we can construct the dictionary matrix $\dic$ with $6$ (dictionary functions for linear terms) $+(4*12)$ (dictionary functions for Hill functions) $=54$ columns.
\begin{equation}
\begin{aligned}
\dic =\left[
\begin{array}{ccccccc}
x_1(t_0) & \ldots & x_6(t_0) & \text{hill}_1(t_0) & \ldots &
\text{hill}_4(t_0)  \\
\vdots &  & \vdots & \vdots &  & \vdots \\
x_1(t_{M-1}) & \ldots & x_6(t_{M-1}) & \text{hill}_1(t_{M-1}) & \ldots &
\text{hill}_4(t_{M-1})
\end{array}
\right] \in \R^{M\times (6+48)}.
\label{oscillator-dic}
\end{aligned}
\end{equation}
Then the output can be defined as
$$\by_i\define
\left[\frac{x_i(t_{1})-x_i(t_{0})}{t_{1}-t_0},\ldots,\frac{x_i(t_{M})-x_i(t_{M-1})}{t_{M}-t_{M-1}}\right]^\top\in {\mathbb{R}}^{M\times 1}, i=1,\ldots,6.$$
Considering the dictionary matrix $\dic$ given in (\ref{oscillator-dic}), the corresponding target $\bw_i$ for the ``correct'' model in (\ref{oscillator}) should be:
\begin{equation}
\begin{aligned}
\bw_{true} &= [\bw_1, \bw_2,\bw_3,\bw_4,\bw_5, \bw_6]  \\
&=
\left[
\begin{array}{cccccc}
-\gamma_{1}(=-0.3) & 0 & 0 & \beta _{1}(=1.4) & 0 & 0 \\
0 & -\gamma_{2}(=-0.4) & 0 & 0 & \beta _{2}(=1.5) & 0 \\
0 & 0 & -\gamma_{3}(=-0.5)  & 0 & 0 & \beta _{3}(=1.6) \\
0 & 0 & 0 & -\gamma_{4} (=-0.2) & 0 & 0 \\
0 & 0 & 0 & 0 & -\gamma_{5} (=-0.4) & 0 \\
0 & 0 & 0 & 0 & 0 & -\gamma_{6}(=-0.6)  \\
\0_{47\times 1} & \0_{45\times 1} & \0_{46\times 1} &  &  &  \\
\alpha _{1}(=4)  & \alpha _{2}(=3)  & \alpha _{3}(=5)  & \0_{48\times 1} & \0_{48\times 1} &
\0_{48\times 1} \\
\0_{0\times 1}  & \0_{2\times 1} & \0_{1\times 1} &  &  &  
\end{array}
\right] . 
\label{truepara}
\end{aligned}
\end{equation}
with values in brackets indicating the correct parameter values.

To generate the time-series data, we took `measurements' every $t_{k+1}-t_{k}=1$ between $t = 0$ and $t = 50$ (arbitrary units) from random initial conditions  which are drawn from a standard uniform distribution on the open interval $(0,1)$.
Thus a total of 51 measurements for each state are collected (including the initial value). It should be noted that the number of rows is less than the number of columns in the dictionary matrix.

\end{example}

\begin{example}
\label{example:k}
A classical example in physics, engineering and biology is the Kuramoto oscillator network \cite{strogatz2000kuramoto}. We consider a network where the Kuramoto oscillators are nonidentical (each has its own natural oscillation frequency $\omega_i$) and the coupling strengths between nodes are not the same. The corresponding discrete-time dynamics can be described by
\begin{equation}
\begin{aligned}
{\phi_i} (t_{k+1})={\phi_i} (t_{k})+(t_{k+1}-t_{k})\left[\omega_i+\sum\limits_{j=1, j\neq i}^{n} w_{ij}g_{ij}(\phi_j(t_k)-\phi_i(t_k))+\xi_i(t_k)\right], i=1, \ldots, n,
\label{Kuramoto}
\end{aligned}
\end{equation}
where $\phi_i\in[0,2\pi)$ is the phase of oscillator $i$, $\omega_i$ is its natural frequency, and the coupling function $g_{ij}$ is a continuous and smooth function, usually taken as $\sin, \ \forall i, j$. $w_{ij}$ represent the coupling strength between oscillators $i$ and $j$ thus $[w_{ij}]_{n\times n}$ defines the topology of the network.
Here, assuming we don't know the exact form of $g_{ij}$, we reconstruct from time-series data of the individual phases $\phi_i$ a dynamical network consisting of $n$ Kuramoto oscillators, i.e., we identify the coupling functions $g_{ij}(\cdot)$ as well as the model parameters, i.e., $\omega_i$ and $w_{ij}$, $i, j=1,\dots,n$.

To define the dictionary matrix $\dic$, we assume that all the dictionary functions are functions of a pair of  state variables only and consider $5$ candidate coupling functions $g_{ij}$:  $\sin(x_j-x_i)$, $\cos(x_j-x_i)$, $x_j-x_i$, $\sin^2 (x_j-x_i)$, and $\cos^2 (x_j-x_i)$. Based on this, we define the dictionary matrix as
\begin{equation}
\begin{aligned}
\dic_{ij}(x_j(t_k),x_i(t_k)) \define& [\sin(x_j(t_k)-x_i(t_k)), \cos(x_j(t_k)-x_i(t_k)), x_j(t_k)-x_i(t_k), \notag \\
& \sin^2 (x_j(t_k)-x_i(t_k)), \cos^2 (x_j(t_k)-x_i(t_k))] \in \R^{5}. \notag
\end{aligned}
\end{equation}

To also take into account the natural frequencies, we add to the last column of $\dic_i$ a unit vector. This leads to the following dictionary matrix $\dic_i$:
\begin{equation}
\begin{aligned}
\dic_{i} &\define
\left[
\begin{array}{cccc}
\dic_{i1}(x_1(t_0),x_i(t_0))& \ldots  &\dic_{in}(x_n(t_0),x_i(t_0)) & 1\\
\vdots  & \vdots  & \vdots  & \vdots \\
\dic_{i1}(x_1(t_{M-1}),x_i(t_{M-1})) & \ldots  & \dic_{in}(x_n(t_{M-1}),x_i(t_{M-1})) & 1
\end{array}
\right]
\in {\mathbb{R}}^{M\times {(5n+1)}}. \notag
\end{aligned}
\end{equation}
Then the output can be defined as
$$\by_i\define
\left[\frac{\phi_i(t_{1})-\phi_i(t_{0})}{t_{1}-t_0},\ldots,\frac{\phi_i(t_{M})-\phi_i(t_{M-1})}{t_{M}-t_{M-1}}\right]^\top\in {\mathbb{R}}^{M\times 1}, i=1,\ldots,n.$$

To generate the time-series data, we simulated a Kuramoto network with $n=100$ oscillators, for which $10\%$ of the non-diagonal entries of the weight matrix $[w_{ij}]_{n\times n}$ are nonzero (assuming $g_{ii}$ and $w_{ii}$ are zeros), and the non-zero $w_{ij}$ values are drawn from a standard uniform distribution on the interval $[-10,10]$. The natural frequencies $\omega_{i}$ are drawn from a normal distribution with mean $0$ and variance $10$.
In order to create simulated data, we simulated the discrete-time model \eqref{Kuramoto} and took `measurements data points' every $t_{k+1}-t_{k}=0.1$ between $t = 0$ and $t = 45$ (in arbitrary units) from random initial conditions drawn from a standard uniform distribution on the open interval $(0,2\pi)$.
Thus a total of 451 measurements for each oscillator phase $\phi_i \in \R^{451\times 501}$ are collected (including the initial value).
Once again, it should be noted that the the number of rows of the dictionary matrix is less than that of columns.

\end{example}

\subsection{Algorithmic Performance Comparisons in terms of Signal-to-Noise Ratio}
\numberwithin{equation}{subsection}
We here investigate the performance of various algorithms including ours (Algorithm~\ref{alg:weight} in the main text)
for different signal-to-noise ratios of the data generated for Example \ref{example:o} and Example \ref{example:k}. We define the signal-to-noise ratio (SNR) as $\mathrm{SNR(dB)}\define 20 \log_{10} (\|\dic \bw_{\mathrm{true}}\|_2/\|\bXi\|_2)$. We considered SNRs ranging from 0 dB to 25 dB for each generated weight. To compare the reconstruction accuracy of the various algorithms considered, we use the root of normalised mean square error (RNMSE) as a performance index, i.e., $\|\hat{\bw}-\bw\|_2/\|\bw\|_2$, where $\hat{\bw}$ is the estimate of the true weight $\bw$.
For each SNR, we performed 200 independent experiments and calculated the average RNMSE for each SNR over these 200 experiments.
In each ``experiment'' of Example \ref{example:o}, we simulated  the repressilator model with random initial conditions drawn from a standard uniform distribution on the open interval $(0,1)$. The parameters were drawn from a standard uniform distribution with the true values $\bw_{true}$ in (\ref{truepara}) taken as the mean and variations around the mean values no more than $10\%$ of the true values. In MATLAB, one can use \texttt{$\bw_{true}$.*(0.9 + 0.2*rand(54,6))} to generate the corresponding parameter matrix for each experiment.
In each ``experiment'' of Example \ref{example:k}, we simulated a Kuramoto network with $n=100$ oscillators, for which $10\%$ of the non-diagonal entries of the weight matrix $[w_{ij}]_{n\times n}$ were nonzero (assuming $g_{ii}$ and $w_{ii}$ are always zero). The non-zero $w_{ij}$ values were drawn from a standard uniform distribution on the interval $[-10,10]$. The natural frequencies $\omega_{i}$ were drawn from a normal distribution with mean $0$ and variance $10$.

Based on these settings, we compared Algorithm~\ref{alg:weight} with nine other state-of-the-art sparse linear regression algorithms available at \cite{sparselab}. \cite{sparselab} provides access to a free MATLAB software package managed by David Donoho and his team and contains various tools for finding sparse solutions of linear systems, least-squares with sparsity, various pursuit algorithms, and more. In Table I, we briefly describe the algorithms provided in \cite{sparselab} and used for this comparison.  In Fig. \ref{fig:nmseO} and Fig. \ref{fig:nmseK}, we plot, for various SNRs, the average RNMSE  obtained using our algorithm and other algorithms in \cite{sparselab} for the problems considered in Example \ref{example:o} and Example \ref{example:k} respectively.
In Fig. \ref{fig:timeO} and Fig. \ref{fig:timeK}, we plot, for the various SNRs considered, the average computational running time required by our algorithm and the other algorithms from \cite{sparselab} for the problems considered in Example \ref{example:o} and Example \ref{example:k} respectively. During this comparison, the inputs for the algorithms listed in Table I are always the same, i.e., the dictionary matrix $\dic$ and the data contained in $\by$. The initialisation and pre-specified parameters for these algorithms were set to their default values provided in \cite{sparselab}. Interested readers can download the package from \cite{sparselab} and reproduce the results presented here under the default settings of the solvers therein.

It should be noted that the dictionary matrices in all the experiments are rank deficient, i.e., neither column rank nor row rank are full. As a consequence, both the MP and OMP algorithm fail to converge or yield results with extremely large RNMSE. As these two algorithms cannot satisfactorily be used, they have been removed from the comparison results presented in Figures~\ref{fig:nmseO} to~\ref{fig:timeK}.

\begin{center}
\begin{table}
\begin{tabular}{ | p{2.5cm} |  p{3.5cm} | p{9.5cm} |}
\hline
Abbreviation & Solver name in \cite{sparselab} & Method  \\ \hline
BP         &     \verb"SolveBP.m"          &   Basis Pursuit \\ \hline
IRWLS        &     \verb"SolveIRWLS.m"      &    Iteratively ReWeighted Least Squares\\ \hline
ISTBlock       &     \verb"SolveISTBlock.m"       &   Iterative Soft Thresholding, block variant with least squares projection\\ \hline
LARS      &     \verb"SolveLasso.m"         &   Implements the LARS algorithm\\ \hline
MP      &     \verb"SolveMP.m"             &    Matching Pursuit\\ \hline
OMP      &     \verb"SolveOMP.m"            &   Orthogonal Matching Pursuit\\ \hline
PFP      &     \verb"SolvePFP.m"            &   Polytope Faces Pursuit algorithm\\ \hline
Stepwise      &     \verb"SolveStepwise.m"       &    Forward Stepwise\\ \hline
StOMP       &     \verb"SolveStOMP.m"          &   Stagewise Orthogonal Matching Pursuit  \\ \hline
\end{tabular}
\caption{Description of algorithms  in \cite{sparselab} used for comparisons}
\end{table}
\end{center}

\begin{figure}[H]
\center
\includegraphics[scale=0.4]{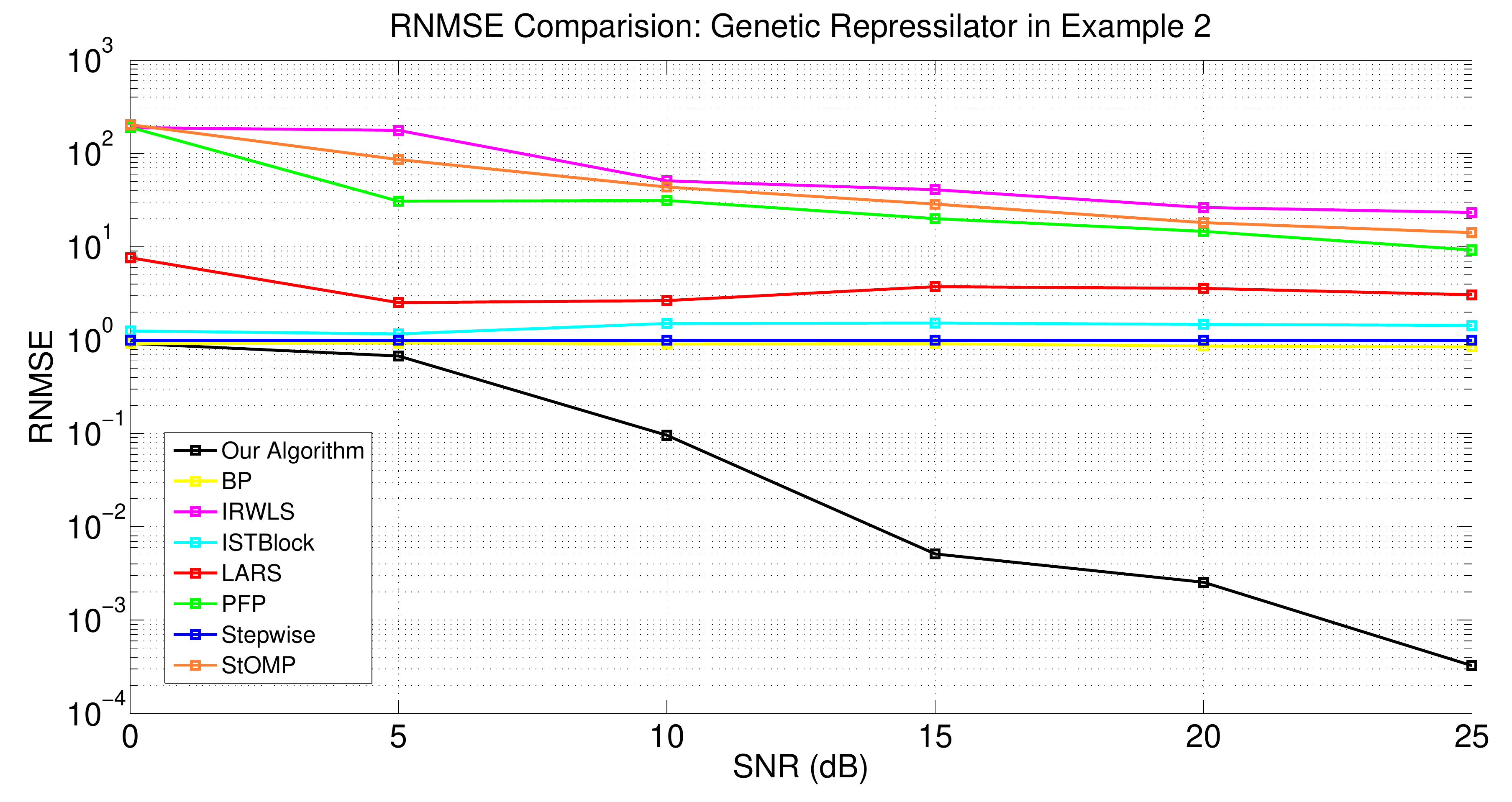}
\caption{Root of Normalised Mean Square Error averaged over 200 independent experiments for the signal-to-noise ratios 0 dB, 5 dB, 10 dB, 15 dB, 20 dB, and 25 dB in Example \ref{example:o}.
}
\label{fig:nmseO}
\end{figure}

\begin{figure}[H]
\center
\includegraphics[scale=0.4]{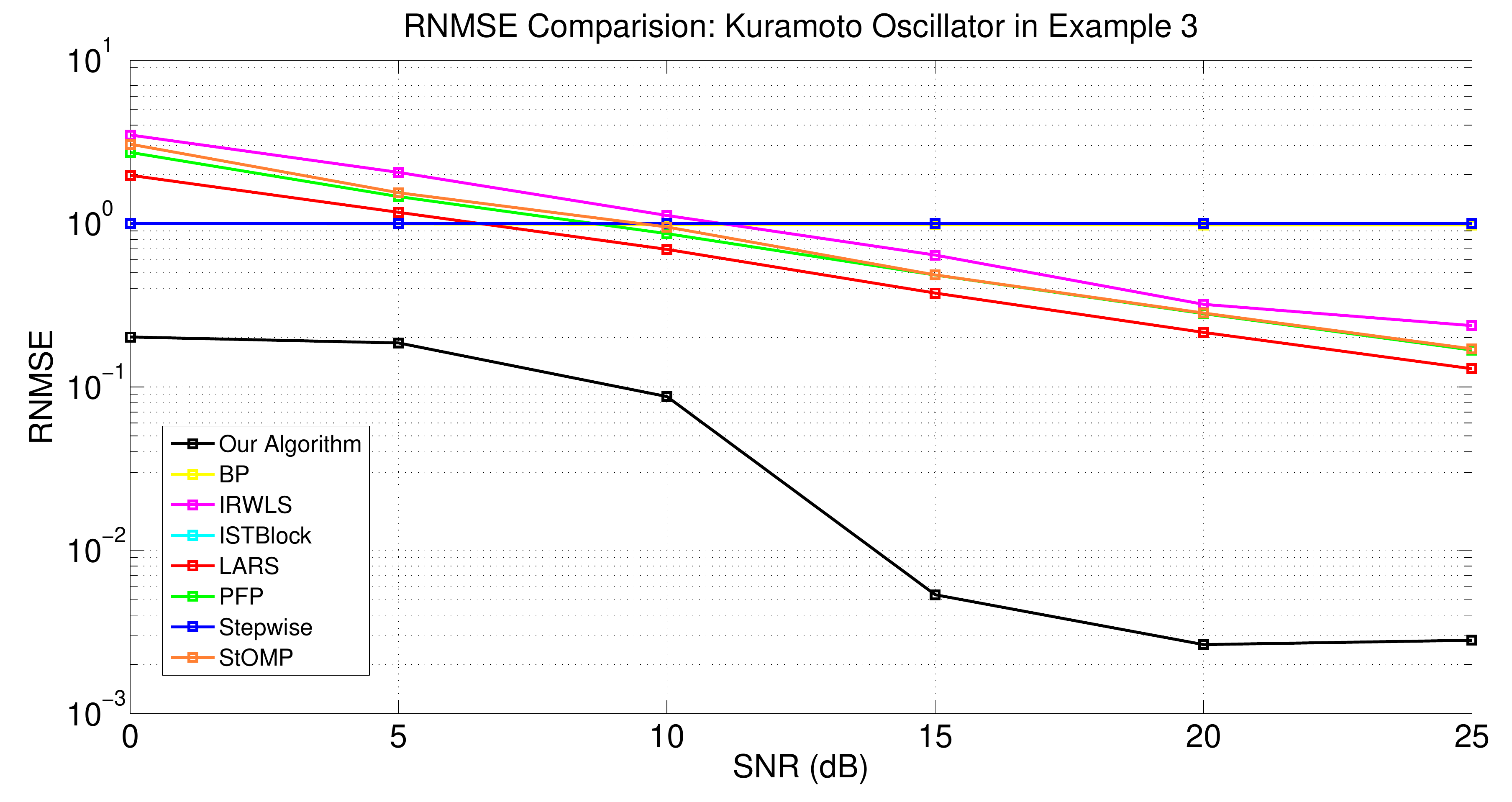}
\caption{Root of Normalised Mean Square Error averaged over 200 independent experiments for the signal-to-noise ratios 0 dB, 5 dB, 10 dB, 15 dB, 20 dB, and 25 dB in Example \ref{example:k}.
	}
\label{fig:nmseK}
\end{figure}
 
\begin{figure}[H]
\center
\includegraphics[scale=0.4]{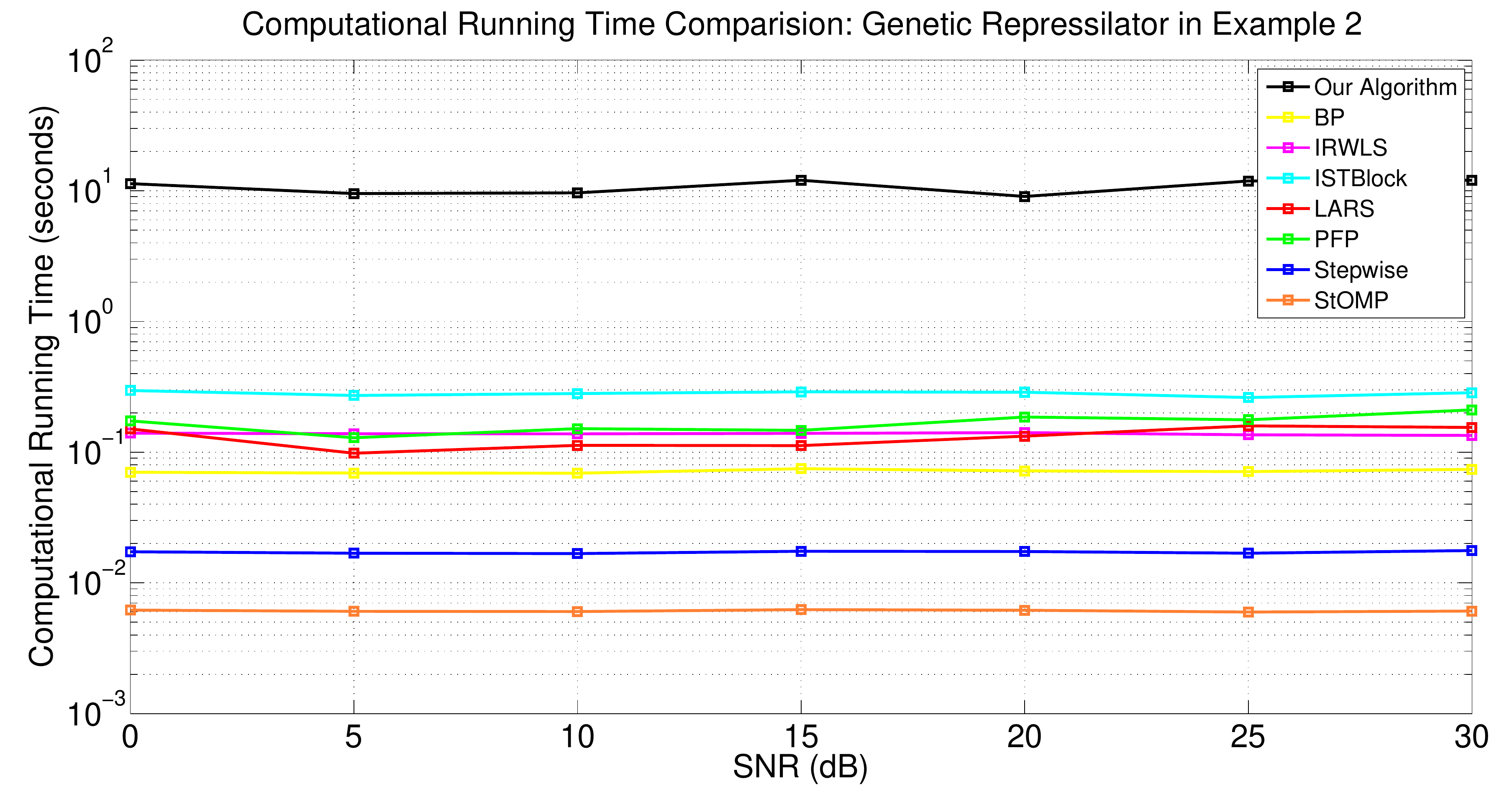} 
\caption{Computational running time averaged over 200 independent experiments for the signal-to-noise ratios 0 dB, 5 dB, 10 dB, 15 dB, 20 dB, and 25 dB in Example \ref{example:o}.}
\label{fig:timeO}
\end{figure}

\begin{figure}[H]
\center
\includegraphics[scale=0.4]{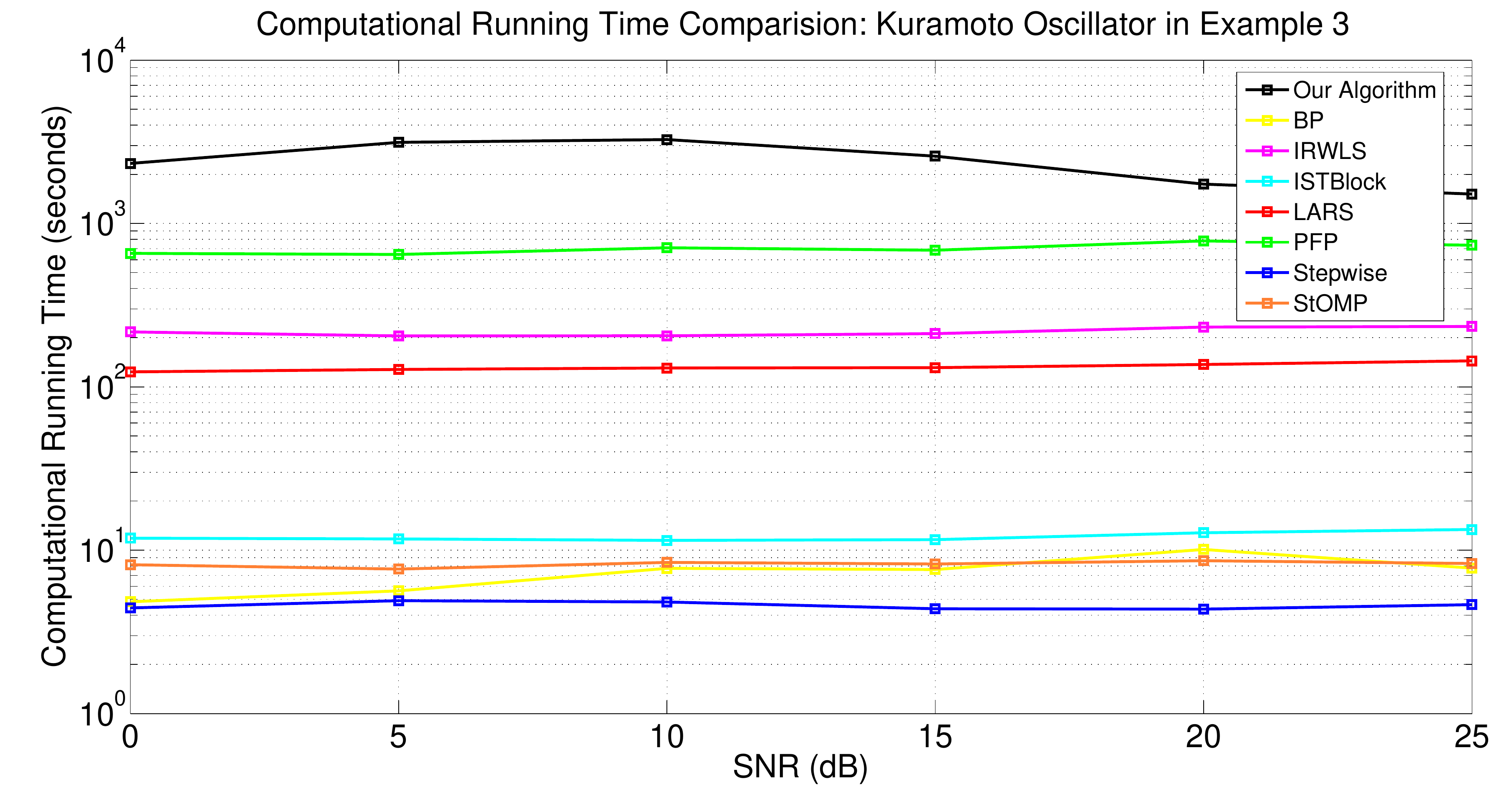} 
\caption{Computational running time averaged over 200 independent experiments for the signal-to-noise ratios 0 dB, 5 dB, 10 dB, 15 dB, 20 dB, and 25 dB in Example \ref{example:k}.}
\label{fig:timeK}
\end{figure}

\subsection{Discussion}
It can be seen from Fig. \ref{fig:nmseO} and Fig. \ref{fig:nmseK} that our algorithm outperforms all the other algorithms in \cite{sparselab} in terms of RNMSE.
However, our algorithm requires more computational running time compared to the other algorithms. There are potentially two reasons for this. The first one is that our algorithm is implemented using the CVX package as a parser \cite{grant2008cvx}. Parsers similar to CVX include YALMIP \cite{lofberg2004yalmip}. CVX and YALMIP call generic SDP solvers, e.g., SDPT3 \cite{toh1999sdpt3} or SeDuMi \cite{sturm1999using}, to solve the convex optimisation problem at hand (we use SeDuMi). While these solvers are reliable for wide classes of optimisation problems, they are not specifically optimised in terms of algorithmic complexity to exploit the specific structure of particular problems, such as ours. The second reason comes from the $5^{th}$ step of Algorithm \ref{alg:weight} where the  matrix $\noise\mathbf{I}+\dic \mathbf{U}^{(k)} \mathbf{W}^{(k)} \dic^\top \in \R^{M \times N}$ has to be inverted to update the weights for the next iteration. Though a pruning rule has been discussed in Remark \ref{remark:threshold}, such inversion at each iteration is inevitable compared to the algorithms considered in \cite{sparselab}. 
It should also be noted from Fig. \ref{fig:nmseO} and Fig. \ref{fig:nmseK} that the RNMSE of some algorithms is independent of the SNR. This may be due to the fact that the coherence of the dictionary matrix is close to $1$ regardless of the SNR. In such case, these algorithms cannot identify the model correctly even when the SNR is high. Therefore, these algorithms are not appropriate for nonlinear system identification.
In future work, we plan to improve our proposed algorithm by exploiting further the structure of the optimisation problem at hand and reducing the associated algorithmic complexity.


\end{document}